\def\nor{\trianglelefteq}
\def\iso{\cong}
\def\ord#1{\vert #1 \vert}
\def\ind#1#2{\vert #1\, :\,#2\vert}
\def\ZZ{\mathbb{Z}}
\def\Z#1{\textrm{Z}(#1)}
\def\CD#1{\mathcal{C}\mathcal{D} (#1)}
\newtheorem{thm}{Theorem}
\newtheorem{prop}[thm]{Proposition}
\newtheorem{cor}[thm]{Corollary}
\title{Quasi-antichain Chermak-Delgado Lattices of Finite Groups}
\author{Ben Brewster}
\address{Ben Brewster, Department of Mathematical Sciences, Binghamton University, Binghamton, New York, 13905}
\email{ben@math.binghamton.edu}
\author{Peter Hauck}
\address{Peter Hauck, Fachbereich Informatik, Eberhard-Karls-Universit\"at T\"ubingen, T\"ubingen, Germany}
\email{hauck@informatik.uni-tuebingen.edu}
\author{Elizabeth Wilcox}
\address{Elizabeth Wilcox, Mathematics Department, Oswego State University, Oswego, New York, 13126}
\email{elizabeth.wilcox@oswego.edu}
\dedicatory{Dedicated to Otto H. Kegel for the occasion of his eightieth birthday.}
\begin{document}

\begin{abstract} The Chermak-Delgado lattice of a finite group is a dual, modular sublattice of the subgroup lattice of the group.  This paper considers groups with a quasi-antichain interval in the Chermak-Delgado lattice, ultimately proving that if there is a quasi-antichain interval between subgroups $L$ and $H$ with $L \leq H$ then there exists a prime $p$ such that $H / L$ is an elementary abelian $p$-group and the number of atoms in the quasi-antichain is one more than a power of $p$.  In the case where the Chermak-Delgado lattice of the entire group is a quasi-antichain, the relationship between the number of abelian atoms and the prime $p$ is examined; additionally, several examples of groups with a quasi-antichain Chermak-Delgado lattice are constructed. \end{abstract}

\maketitle

This paper pursues the nature of the Chermak-Delgado lattice of a finite group.  The Chermak-Delgado lattice was introduced by Chermak and Delgado \cite{cd}.  Isaacs \cite{Isaacs} re-introduced the lattice, sparking further study that resulted in \cite{BW2012} and \cite{BHW2013}.  In this article, we provide three primary contributions to the study of Chermak-Delgado lattices: a description of the structure of groups with a quasi-antichain (defined below) as an interval in the Chermak-Delgado lattice, results that narrow the possible structure of a quasi-antichain realized as a Chermak-Delgado lattice, and examples to illustrate the breadth of possibilities. Among these contributions is a proof that if a Chermak-Delgado lattice has an interval which is a quasi-antichain then the width must be a power of a prime plus 1.

Throughout the article, let $G$ be a finite group and $p$ be a prime.  The Chermak-Delgado lattice of a finite group $G$ consists of subgroups $H \leq G$ such that $\ord H \ord {C_G(H)}$ is maximal among all subgroups of $G$.  For any subgroup $H$ of $G$, the product $\ord H \ord {C_G(H)}$ is called the {\it Chermak-Delgado measure of $H$} (in $G$) and is denoted by $m_G(H)$; if the group $G$ is clear from context then we write simply $m(H)$. To denote the maximum possible Chermak-Delgado measure of a subgroup in $G$ we write $m^*(G)$ and we refer to the set of all subgroups with measure attaining that maximum as the Chermak-Delgado lattice of $G$, or $\CD G$.

It is known that the Chermak-Delgado lattice is a modular self-dual lattice and if $H, K \in \CD G$ then $HK = KH = \langle H, K \rangle$.  The duality of the Chermak-Delgado lattice is a result of the fact that if $H \in \CD G$ then $C_G(H) \in \CD G$ and $H = C_G (C_G(H))$.  Moreover, if $M$ is the maximum subgroup in the Chermak-Delgado lattice of a group $G$ then the Chermak-Delgado lattices of $G$ and $M$ coincide.  It is additionally known that the co-atoms in the Chermak-Delgado lattice are normal in $M$ and consequently the atoms, as centralizers of normal subgroups, are also normal in $M$.  In \cite{BHW2013}, groups whose Chermak-Delgado lattice is a chain were studied; a {\it chain of length $n$}, where $n$ is a positive integer, is a totally ordered lattice with $n+1$ subgroups. We call a lattice consisting of a maximum, a minimum, and the atoms of the lattice a {\it quasi-antichain} and the width of the quasi-antichain is the number of atoms.  A quasi-antichain of width 1 is also a chain of length 2.

Let $L \leq H \leq G$; we use $\llbracket L, H \rrbracket$ to denote the interval from $L$ to $H$ in a sublattice of the lattice of subgroups of $G$.  If $\llbracket L, H \rrbracket$ is an interval in $\CD G$ then the duality of the Chermak-Delgado lattice tells us that $\llbracket C_G(H), C_G(L) \rrbracket$ is an interval in $\CD G$.  Of course, these intervals may overlap or even coincide exactly. In Section~\ref{intervals} we make no assumption about the intersection of $\llbracket L, H \rrbracket$ and $\llbracket C_G(H), C_G(L) \rrbracket$; in the final two sections we study the situation where these two intervals not only are equal, but are the entirety of $\CD G$.

\section{Quasi-antichain Intervals in Chermak-Delgado Lattices}\label{intervals}

Let $G$ be a group with $L < H \leq G$ such that $\llbracket L, H \rrbracket$ is an interval in $\CD G$.  The main theorem of this section establishes that if $\llbracket L, H \rrbracket$ is a quasi-antichain of width $w \geq 3$ then there exists a prime $p$ and positive integers $a,b$ with $b \leq a$ such that $H / L$ is an elementary abelian $p$-group with order $p^{2a}$ and $w = p^b + 1$.  To make the role of the duality more transparent in the proofs, set $H^* = C_G(L)$ and $L^* = C_G(H)$.  Observe that $C_G(H^*) = L$ and $C_G(L^*) = H$.  

We start with a general statement about subgroups that are in the interval $\llbracket L, H \rrbracket$ in $\CD G$.  Let $p$ be a prime dividing the index of $L$ in $H$.    For this result, we remind the reader that the notation $\Omega_k(M)$, where $k$ is a positive integer and $M$ is any group, denotes the subgroup of $M$ generated by the elements whose order divides $p^k$.  

The hypothesis of Proposition~\ref{omegaspt1} may initially sound restrictive: We require that $G$ be a group with $\llbracket L, H \rrbracket$ in $\CD G$ such that $[HH^*, HH^*] \leq L \cap L^*$. Note that $L \nor H$ and $L^* \nor H^*$, so the quotient groups described in Proposition~\ref{omegaspt1} are well-defined.   Moreover, notice that the hypotheses of the proposition occur when $G $ is a $p$-group of nilpotence class 2 and $H = G \in \CD G$.  

\begin{prop}\label{omegaspt1} Let $G$ be a group with an interval $\llbracket L, H \rrbracket$ in $\CD G$ such that $[HH^*, HH^*] \leq L \cap L^*$.  Suppose that $p$ is a prime dividing $\ord {H / L}$.  The subgroups $A_k(H), B_k(H) \leq H$ where $A_k(H) / L = \Omega_k (H / L)$ and $B_k(H) = \langle x^{p^k} \mid x \in H \rangle L$ are members of $\CD G$ for all positive values of $k$, as are the similarly defined subgroups $A_k(H^*),B_k(H^*)$ of $H^*$. \end{prop}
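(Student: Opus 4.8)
The plan is to prove that all four subgroups have Chermak--Delgado measure equal to $m^*(G)$. By the self-duality of $\CD G$ it is enough to do this for $A_k(H)$ and $A_k(H^*)$ and then to recover the two $B$-subgroups as centralizers. I begin with two elementary facts. Since $[H,H]\le[HH^*,HH^*]\le L\cap L^*\le L$, the quotient $H/L$ is abelian; hence $\Omega_k(H/L)$ is exactly the set of cosets $hL$ with $h^{p^k}\in L$, the $p^k$-th powers in $H/L$ form the subgroup $\mho_k(H/L)=B_k(H)/L$, and $|\Omega_k(H/L)|\,|\mho_k(H/L)|=|H/L|$ because the $p^k$-power endomorphism of $H/L$ has kernel $\Omega_k(H/L)$ and image $\mho_k(H/L)$; the same holds with $H,L$ replaced by $H^*,L^*$. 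Second, $L\cap L^*$ centralizes both $H$ and $H^*$, so $[HH^*,HH^*]\le L\cap L^*$ is central in $HH^*$; consequently, for $h\in H$ and $y\in H^*$ the commutator $[h,y]$ is central in $HH^*$ and $[h,y^{p^k}]=[h,y]^{p^k}=[h^{p^k},y]$.

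Next I observe the containment $B_k(H^*)\le C_G(A_k(H))$: the subgroup $B_k(H^*)$ is generated by $L^*=C_G(H)$, which centralizes $A_k(H)\le H$, together with the powers $y^{p^k}$ for $y\in H^*$, and if $h\in A_k(H)$ then $h^{p^k}\in L=C_G(H^*)$, so $[h,y^{p^k}]=[h^{p^k},y]=1$ by the commutator identity. The mirror computation, using $A_k(H^*)\le H^*=C_G(L)$ and $y^{p^k}\in L^*=C_G(H)$ for $y\in A_k(H^*)$, gives $A_k(H^*)\le C_G(B_k(H))$. Using $m^*(G)=m_G(L)=|L|\,|H^*|$ and $m^*(G)=m_G(H)=|H|\,|L^*|$, these two containments yield
\[
m_G\bigl(A_k(H)\bigr)\ \ge\ |A_k(H)|\,|B_k(H^*)|\ =\ m^*(G)\cdot\frac{|\Omega_k(H/L)|}{|\Omega_k(H^*/L^*)|},
\]
and, symmetrically, $m_G\bigl(A_k(H^*)\bigr)\ge m^*(G)\cdot|\Omega_k(H^*/L^*)|/|\Omega_k(H/L)|$.

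The argument now closes itself. Since $m_G(X)\le m^*(G)$ for every $X\le G$, the product of the two displayed lower bounds is $m^*(G)^2\ge m_G(A_k(H))\,m_G(A_k(H^*))\ge m^*(G)^2$, which forces equality throughout: $|\Omega_k(H/L)|=|\Omega_k(H^*/L^*)|$ and $m_G(A_k(H))=m_G(A_k(H^*))=m^*(G)$, so $A_k(H),A_k(H^*)\in\CD G$. By self-duality $C_G(A_k(H))$ and $C_G(A_k(H^*))$ lie in $\CD G$; and comparing orders, $|C_G(A_k(H))|=m^*(G)/|A_k(H)|=|H^*|/|\Omega_k(H/L)|=|H^*|/|\Omega_k(H^*/L^*)|=|B_k(H^*)|$, so the containment above is an equality and $B_k(H^*)=C_G(A_k(H))\in\CD G$; likewise $B_k(H)=C_G(A_k(H^*))\in\CD G$.

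This proof uses remarkably little: only the commutator identity $[h,y^{p^k}]=[h,y]^{p^k}=[h^{p^k},y]$ forced by the class-two situation in the hypothesis, the identity $|\Omega_k(A)|\,|\mho_k(A)|=|A|$ for finite abelian $A$, and the self-duality of $\CD G$; no structural description of $H/L$ or $H^*/L^*$ is needed, and the ``symmetric'' bound is legitimate precisely because its ingredients, namely the mirror containment $A_k(H^*)\le C_G(B_k(H))$ and the equalities $C_G(H^*)=L$, $C_G(L^*)=H$, $[H^*H,H^*H]\le L^*\cap L$, $p\mid|H^*/L^*|=|H/L|$, were recorded alongside their counterparts. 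I expect the only friction in writing this up to be index bookkeeping --- in particular keeping track of the fact that $m^*(G)$ equals both $|L|\,|H^*|$ and $|H|\,|L^*|$, which is exactly what makes the two lower bounds multiply to $m^*(G)^2$ and hence collapse to equalities.
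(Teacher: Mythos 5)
Your proof is correct and takes essentially the same route as the paper's: the same class-two commutator identity $[x^{p^k},y]=[x,y^{p^k}]$ gives the containment $B_k(H^*)\le C_G(A_k(H))$ (and its mirror), and the same order identity $\ord{\Omega_k(H/L)}\,\ord{B_k(H)}=\ord{H}$ drives the measure computation down to $m^*(G)$. The only cosmetic difference is at the end: the paper assumes without loss of generality that $\ord{A_k(H)/L}\ge\ord{A_k(H^*)/L^*}$ and collapses a single chain of inequalities, whereas you multiply the two symmetric lower bounds against $m^*(G)^2$ to force equality throughout --- both arguments close in the same way.
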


\begin{proof} Let $k$ be a positive integer.  Without loss of generality, assume that $\ord {A_k(H)/L} \geq \ord {A_k(H^*)/L^*}$.  We first show that $C_G(A_k(H)) = B_k(H^*)$ and that $A_k(H) \in \CD G$.   

Observe that if $x \in H$ and $y \in H^*$ then $[x,y] \in [H,H^*] \leq L \cap L^* = C_G(H^*) \cap C_G(H)$.  Therefore $[x^p,y] = [x,y^p]$ whenever $x \in H$ and $y \in H^*$.  Moreover, if $x \in A_k(H)$ then $x^{p^k} \in L$, so $[x^{p^k}, y] = 1$ for all $y \in C_G(L) = H^*$.  Thus if $x \in A_k(H)$ and $y$ is a generator of $B_k(H^*)$ then $[x,y] = 1$, therefore $B_k(H^*) \leq C_G(A_k(H))$.

Since the quotient $H / L$ is abelian, $\ord {A_k(H) / L} = \ord {H / B_k(H)}$ or equivalently $\ord {A_k(H) / L}  \ord {B_k(H)} = \ord H$. The same is true regarding $\ord {H^*}$ and the subgroups $A_k(H^*)$, $B_k(H^*)$.  Thus the measure of $A_k(H)$ in $G$ can be calculated as follows:
$$
\begin{array}{rcl}
m(A_k(H)) = \ord {A_k(H)} \ord {C_G(A_k(H))} &\geq& \ord {A_k(H)} \ord {B_k(H^*)}\\
&=& \ord {A_k(H) / L} \ord L \ord {B_k(H^*)}\\
&\geq& \ord {A_k(H^*) / L^*} \ord L \ord {B_k(H^*)}\\
&=& \ord {H^*} \ord L\\
&=& \ord {C_G(L)} \ord L = m^*(G).\\
\end{array}
$$
Therefore each inequality above is actually an equality, with $C_G(A_k(H)) = B_k(H^*)$ and $\ord {A_k(H)/L} = \ord {A_k(H^*)/L^*}$. Additionally $A_k(H)$, $B_k(H^*)$, $A_k(H^*)$, $B_k(H) \in \CD G$.\end{proof}

For the rest of the paper, we study intervals that are quasi-antichains.  Ultimately we will use Proposition~\ref{omegaspt1} to show that $A_1 = H$ and $A_1^* = H^*$ in the case that $\llbracket L, H \rrbracket$ is a quasi-antichain of width $w \geq 3$.  The next two propositions establish important facts about the atoms of a quasi-antichain interval in $\CD G$, as well as show that such an interval satisfies the hypothesis of Proposition~\ref{omegaspt1}. 

Let $\llbracket L, H \rrbracket$ be a quasi-antichain of width $w$ throughout the remainder of the article.  Let the $w$ atoms of the quasi-antichain be denoted by $K_1, K_2, \dots, K_w$.  The interval $\llbracket L^*, H^* \rrbracket$ is also a quasi-antichain in $\CD G$, with atoms $C_G(K_i)$ where $1 \leq i \leq w$.  For each $i$, let $K_i^* = C_G(K_i)$ so that $C_G(K_i^*) = K_i$.

\begin{prop}\label{basic}  If $K_1$, $K_2$ are distinct atoms of the quasi-antichain then $K_i \nor H$ for $i = 1,2$, $L \nor H$, and $[K_1,K_2] \leq L$ and analogously for $H^*$, $K_1^*$, $K_2^*$, and $L^*$.  Moreover, $\ord {K_1: L} = \ord {K_2^*: L^*}$.  

If $w \geq 3$ then $K_i / L \iso K_j / L$ and $K_i^* / L^* \iso K_j^* / L^*$ for all $i,j$ with $1 \leq i,j \leq w$. Furthermore: 
$$
\ord {H / L} = {\ord{H / K_1}}^2 = {\ord {H^* / K_1^*}}^2 = \ord {H^*/ L^*}.
$$
\end{prop}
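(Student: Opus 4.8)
The plan is to isolate one genuinely delicate normality argument and reduce everything else to counting. I use freely the stated properties of $\CD G$ (modular, self-dual sublattice of the subgroup lattice, joins equal products, $C_G(X)\in\CD G$ and $C_G(C_G(X))=X$ for $X\in\CD G$) and their immediate consequences, notably that $C_G(X\cap Y)=C_G(X)\,C_G(Y)$ for $X,Y\in\CD G$.

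The cheap part first. Since $\CD G$ is a sublattice, for distinct atoms $K_1,K_2$ the meet $K_1\cap K_2\in\CD G$ lies in $\llbracket L,H\rrbracket$ strictly below $K_1$, so $K_1\cap K_2=L$; dually the join $K_1K_2=\langle K_1,K_2\rangle\in\CD G$ lies in $\llbracket L,H\rrbracket$ strictly above $K_1$, so $K_1K_2=H$. Since $K_i=C_G(K_i^*)$, the subgroups $K_i^*$ are pairwise distinct and the analogous identities hold in $\llbracket L^*,H^*\rrbracket$. From $\ord H\,\ord L=\ord{K_1}\ord{K_2}$ one gets $\ord{H:K_2}=\ord{K_1:L}$, and from $\ord{K_2}\ord{K_2^*}=m^*(G)=\ord H\,\ord{L^*}$ one gets $\ord{H:K_2}=\ord{K_2^*:L^*}$; hence $\ord{K_1:L}=\ord{K_2^*:L^*}$.

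The crux is the normality. For distinct atoms $K_1,K_2$ set $D=C_G(K_1)\cap K_2\in\CD G$. Since $D$ centralizes $K_1$, we have $K_1D=\langle K_1,D\rangle\in\CD G$ and $K_1\le K_1D\le H$, so $K_1D\in\{K_1,H\}$. If $K_1D=H$, a short commutator computation gives $[H,K_1]=[K_1D,K_1]=[K_1,K_1]\le K_1$, so $K_1\nor H$; interchanging $K_1$ and $K_2$ handles $K_2$, and running the argument in $\llbracket L^*,H^*\rrbracket$ handles $K_1^*,K_2^*$. The remaining task, and the main obstacle, is to exclude the alternative $K_1D=K_1$, i.e.\ $C_G(K_1)\cap K_2\le K_1\cap K_2=L$. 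Here I would use the dual pairing $C_G(D)=C_G(C_G(K_1)\cap K_2)=C_G(C_G(K_1))\,C_G(K_2)=K_1\,C_G(K_2)$: the bound $\ord D\le\ord L$ together with $m(D)=m^*(G)=\ord L\,\ord{H^*}$ forces $\ord{K_1C_G(K_2)}\ge\ord{H^*}$, and pushing this, together with the symmetric statement coming from $C_G(K_2)\cap K_1$ and the corresponding facts in $\llbracket L^*,H^*\rrbracket$, against the extremality of $m^*(G)$ should produce a contradiction. This is the one step that is not essentially formal, and the place where the self-duality of $\CD G$ is used in earnest.

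Finally, assembling the rest. Once every atom is normal in $H$, $L=K_1\cap K_2\nor H$ as an intersection of normal subgroups, and $[K_1,K_2]\le K_1\cap K_2=L$ because $K_1,K_2\nor H=\langle K_1,K_2\rangle$; the dual statements follow by the same reasoning in $\llbracket L^*,H^*\rrbracket$. For $w\ge 3$ and $i\ne j$, pick an atom $K_l$ with $l\notin\{i,j\}$; since $K_l\nor H$, $K_iK_l=H$ and $K_i\cap K_l=L$, the second isomorphism theorem gives $K_i/L\iso K_iK_l/K_l=H/K_l\iso K_j/L$, so all the $K_i/L$ are isomorphic, and dually all the $K_i^*/L^*$. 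Writing $n$ for the now common index $\ord{K_i:L}$, we get $\ord{H/K_1}=\ord{H:K_1}=\ord{K_2:L}=n$ and $\ord{H:L}=\ord{H:K_1}\,\ord{K_1:L}=n^2$, so $\ord{H/L}=\ord{H/K_1}^2$; the dual computation gives $\ord{H^*/L^*}=\ord{H^*/K_1^*}^2$; and $\ord{H/L}=\ord{H^*/L^*}$ since $m^*(G)=\ord H\,\ord{L^*}=\ord L\,\ord{H^*}$. Together these give the asserted chain of equalities.
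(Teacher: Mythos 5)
Everything in your proposal except the normality step is correct and closely parallels the paper: the identification $K_1\cap K_2=L$, $K_1K_2=H$, the index computation giving $\ord{K_1 : L}=\ord{K_2^* : L^*}$, the isomorphisms $K_i/L\iso H/K_l\iso K_j/L$ for $w\geq 3$, and the final chain of order equalities are all fine. But the normality of the atoms is the one genuinely nontrivial claim, and there your argument has a real gap --- one you flag yourself, but which is worse than you suggest. You propose to show $K_1\nor H$ by proving $K_1D=H$ where $D=C_G(K_1)\cap K_2$, i.e.\ by excluding the alternative $C_G(K_1)\cap K_2\leq L$. That alternative cannot be excluded, because it actually occurs: whenever $K_1$ is an abelian atom with $C_G(K_1)=K_1$ (e.g.\ every atom of an extraspecial group of order $p^3$, or the atoms $M,N$ in the paper's second example), one has $C_G(K_1)\cap K_2=K_1\cap K_2=L$, so $K_1D=K_1\neq H$ --- yet $K_1$ is still normal in $H$ in these examples. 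So no amount of pushing against the extremality of $m^*(G)$ will yield the contradiction you hope for; the dichotomy you set up simply does not detect normality.

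The paper's argument is different and much shorter: for $h\in H$ the conjugate $K_1^h$ again has measure $m^*(G)$, hence lies in $\CD G$, and $K_1K_1^h\in\CD G$ sits in $\llbracket K_1,H\rrbracket$, which in a quasi-antichain contains only $K_1$ and $H$. Since a finite group is never the product of two conjugates of a proper subgroup (if $H=K_1K_1^h$ then writing $h=k_1h^{-1}k_2h$ forces $h\in K_1$), the case $K_1K_1^h=H$ is impossible, so $K_1^h=K_1$ and $K_1\nor H$. If you replace your centralizer-intersection argument with this conjugation argument, the rest of your write-up goes through essentially as the paper's does.
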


\begin{proof} Let $K_1, K_2 \in \CD G$ with $L < K_i < H$ for $i = 1,2$. Because the interval $\llbracket L,H \rrbracket $ is a quasi-antichain, $H = K_1K_2$ and $K_1 \cap K_2 = L$.  From this structure and because $H$ cannot equal $K_1K_1^h$ for $h \in H$, it follows that $K_1 \nor H$ (similarly for $K_2$).   Therefore $L \nor H$ and $[K_1, K_2] \leq L$.  The equality $m^*(G) = m(H) = m(K_2)$ implies
$$
\frac{\ord {K_2} \ord {K_1}}{\ord L} \ord {C_G(H)} = \ord H \ord {C_G(H)} = \ord {K_2} \ord {C_G(K_2)},
$$
and consequently $\ind {K_1}{L} = \ind {C_G(K_2)}{C_G(H)} = \ind {K_2^*}{L^*}$. 

In the situation that $w \geq 3$, then $H = K_1K_2 = K_3K_2$ where $K_1 \cap K_2 = K_2 \cap K_3 = L$ and thus $\ord {K_i} = \ord {K_j}$ for all $i, j$ with $1 \leq i, j \leq w$.  This additionally yields $K_i / L \iso K_j / L$ for all $i, j$.  From the Isomorphism Theorems it follows that $\ord {H / K_1} = \ord {K_1 / L}^2$.  

The same arguments applied to the quasi-antichain $\llbracket L^*, H^* \rrbracket$ yield the remaining assertions. \end{proof}

\begin{prop}\label{centralizing} If $w \geq 3$ then $[H,H^*] \leq L \cap L^* = C_G(HH^*)$.  Additionally, $H / L$ and $H^* / L^*$ are isomorphic elementary abelian $p$-groups. In particular, if $G = H$ and $G \in \CD G$ then $G / \Z G$ and $[G,G]$ are elementary abelian $p$-groups. \end{prop}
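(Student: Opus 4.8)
The plan is to prove the three assertions in the order: (a) $[H,H^*]\le L\cap L^*=C_G(HH^*)$; (b) $H/L$ and $H^*/L^*$ are isomorphic elementary abelian $p$-groups; (c) the consequence for $G=H\in\CD G$. The equality $L\cap L^*=C_G(HH^*)$ is free: $L=C_G(H^*)$ and $L^*=C_G(H)$, so $L\cap L^*=C_G(H^*)\cap C_G(H)=C_G(HH^*)$.

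I would first upgrade the normality statements of Proposition~\ref{basic}. Since $L\le K_i$ for every atom, $K_i^*=C_G(K_i)\le C_G(L)$, so $[L,K_i^*]=1$; feeding $[K_i^*,K_i]=1$, $[K_i,K_j]\le L$ and $[L,K_i^*]=1$ into the Three Subgroups Lemma (applied to the triple $K_i^*,K_i,K_j$) shows $K_j$ normalizes $K_i^*$, and dually. Hence every $K_i$, every $K_i^*$, and therefore $H=\langle K_i\rangle$, $H^*=\langle K_i^*\rangle$, $L$, $L^*$ are all normal in $HH^*$. Next, the ``abelian'' part can be obtained directly from Proposition~\ref{basic} with $w\ge 3$: $H/L=(K_1/L)\times(K_2/L)$ is an internal direct product of isomorphic normal subgroups (using $K_1\cap K_2=L$ and $[K_1,K_2]\le L$), and $K_3/L$ is a further normal subgroup isomorphic to both factors, meeting each trivially and generating $H/L$ with each; such a ``diagonal'' normal subgroup must be the graph of an isomorphism $K_1/L\to K_2/L$ whose image lies in the centre of $K_2/L$, forcing $K_2/L$ abelian, hence all $K_i/L$ abelian, hence $H/L$ abelian. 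Dually $H^*/L^*$ is abelian, so $[H,H]\le L$ and $[H^*,H^*]\le L^*$.

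With these in hand I would establish $[H,H^*]\le L\cap L^*$. From normality in $HH^*$ we get $[H,H^*]\le H\cap H^*$, and since $[H,H]\le L$ and $[H\cap H^*,H]\le [H,H]\le L$, and dually on the $H^*$ side, the iterated commutators $[H,H^*,H]$ and $[H,H^*,H^*]$ are pushed into $L$ and $L^*$ respectively; combining a further such reduction with a Chermak--Delgado measure computation in the spirit of the proof of Proposition~\ref{omegaspt1} (exhibiting a subgroup whose measure is forced to equal $m^*(G)$, so that the intermediate inclusions become equalities) yields $[H,H^*]\le L\cap L^*$. Then Proposition~\ref{omegaspt1} applies — its proof uses exactly $[H,H^*]\le L\cap L^*$ together with $H/L$, $H^*/L^*$ abelian — so $A_1(H),B_1(H)\in\CD G$ with $L\le A_1(H),B_1(H)\le H$; thus each is one of $L,K_1,\dots,K_w,H$, and the relation $\ord{A_1(H)/L}\,\ord{B_1(H)}=\ord H$ together with $\ord{K_i/L}^2=\ord{H/L}$ (Proposition~\ref{basic}) forces $A_1(H)=H$, i.e. $H/L=\Omega_1(H/L)$ is elementary abelian. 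Dually $H^*/L^*$ is elementary abelian, and since $\ord{K_1/L}=\ord{K_1^*/L^*}$ (Proposition~\ref{basic}) while elementary abelian groups are determined by order, $H/L\cong H^*/L^*$; here the prime $p$ is the unique prime dividing the order of the nontrivial group $H/L$. Finally, if $G=H\in\CD G$ then $L^*=C_G(H)=\Z G$, hence $H^*=C_G(L^*)=C_G(\Z G)=G$ and $L=C_G(H^*)=\Z G=L^*$, so $G/\Z G=H/L$ is elementary abelian; moreover $[G,G]=[H,H]\le L=\Z G$, so $G$ has nilpotence class at most $2$, and for $x,y\in G$ we have $x^p\in\Z G$, whence $[x,y]^p=[x^p,y]=1$, so the abelian group $[G,G]$ is elementary abelian.

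The main obstacle is the step $[H,H^*]\le L\cap L^*$: bridging from the one-sided facts $[K_i,K_j]\le L$ and the normalities to the sharp two-sided inclusion. This is where the Three Subgroups Lemma must be coupled with a Chermak--Delgado measure/counting argument, and it is exactly where the hypothesis $w\ge 3$ is indispensable.
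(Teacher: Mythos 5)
Your proposal does not actually prove the central claim $[H,H^*]\le L\cap L^*$. You reduce to $[H,H^*]\le H\cap H^*$, push $[H,H^*,H]$ into $L$ and $[H,H^*,H^*]$ into $L^*$, and then defer the rest to ``a further such reduction'' combined with an unspecified Chermak--Delgado measure computation. Those two containments only say that $[H,H^*]$ is centralized by $H$ modulo $L$ and by $H^*$ modulo $L^*$; they do not yield $[H,H^*]\le C_G(H^*)=L$ or $[H,H^*]\le C_G(H)=L^*$, and no measure argument is exhibited that would close the distance. You flag this yourself as ``the main obstacle,'' which is accurate: as written, the key step is an IOU. The irony is that you already hold the needed ingredient. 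Your Three Subgroups Lemma argument correctly shows that each $K_j$ normalizes each $K_i^*$ (the paper gets the same fact more cheaply: $K_i$ normalizes $K_j$, hence normalizes $C_G(K_j)=K_j^*$). From there the paper finishes with one short commutator computation and no measure argument at all: since $[K_i,K_i^*]=1$ and $H^*=K_i^*K_j^*$ for every $j\ne i$, the identity $[a,bc]=[a,c][a,b]^c$ gives $[K_i,H^*]=[K_i,K_i^*K_j^*]\le K_j^*$; doing this for two distinct choices of $j\ne i$ --- this is exactly where $w\ge 3$ enters --- gives $[K_i,H^*]\le K_j^*\cap K_l^*=L^*$, hence $[H,H^*]\le L^*$, and dually $[H,H^*]\le L$. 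The measure computation (Proposition~\ref{omegaspt1}) is only needed afterwards, for the elementary abelian conclusion.

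Two smaller remarks. Your derivation that $H/L$ is abelian, via normality of the diagonal subgroup $K_3/L$ in $K_1/L\times K_2/L$, is correct and is a legitimate variant of the paper's direct computation $[K_1,K_2K_3]\le L$; your observation that the proof of Proposition~\ref{omegaspt1} only uses $[H,H^*]\le L\cap L^*$ together with $H/L$, $H^*/L^*$ abelian (rather than the full stated hypothesis $[HH^*,HH^*]\le L\cap L^*$) is also a fair and even useful clarification. However, your claim that $A_1(H)=H$ is ``forced'' by the relation $\ord{A_1(H)/L}\,\ord{B_1(H)}=\ord H$ does not follow: if $A_1(H)$ were an atom, then $B_1(H)$ would simply be another atom, with no numerical contradiction. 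The paper sidesteps this by not insisting on $A_1(H)=H$: if $A_1(H)=K_i$, then $K_i/L=\Omega_1(H/L)$ is elementary abelian, all atoms are isomorphic modulo $L$, and $H/L=K_1/L\times K_2/L$ is elementary abelian anyway. (One can in fact force $A_1(H)=H$, but by noting that $\Omega_1(H/L)$ meets both $K_1/L$ and $K_2/L$ nontrivially while distinct atoms intersect in $L$, not by counting orders.)
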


\begin{proof} Since $w \geq 3$, there exist at least three distinct atoms $K_1$, $K_2$, and $K_3$ in the interval $\llbracket L,H \rrbracket$ in $\CD G$.  By Proposition~\ref{basic},
$$
[K_1, K_2K_3] \leq \langle [K_1,K_3][K_1,K_2] \rangle \leq L.
$$
Therefore $K_1 / L$ centralizes $K_2K_3 / L = H / L$. By symmetry, the same holds for $K_2 / L$; consequently $H = K_1K_2$ centralizes $H / L$ and $H / L$ is abelian. Similarly $H^*$ centralizes $H^* / L^*$ and the latter is abelian.

Since $K_i$ normalizes every $K_j$, it also normalizes every $K_j^*$. Therefore $[K_i,H^*] = [K_i,K_1^*K_2^*] \leq K_2^*$ and $[K_i,H^*] = [K_i,K_1^*K_3^*] \leq K_3^*$, so that $[K_i,H^*] \leq L^*$.  Similarly $[K_2,H^*] \leq L^*$ and thus $[H,H^*] \leq L^*$.  In the same way, $[H,H^*] \leq L$.  By the duality of the Chermak-Delgado lattice, $L \cap L^* = C_G(HH^*)$ so $[H,H^*] \leq L \cap L^* = C_G(HH^*)$, as claimed.

Applying Proposition~\ref{omegaspt1}, the subgroup $A_1$ where $A_1 / L = \Omega_1(H/L)$ is a member of $\CD G$.  Since $\CD G$ is a quasi-antichain, either $A = H$ or there exists $i$ such that $1 \leq i \leq w$ where $K_i = A$.  At minimum, $K_i / L$ is an elementary abelian $p$-group but, as $K_i / L \iso K_j / L$ for all $i, j$, we have $H / L$ is an elementary abelian $p$-group.  With similar reasoning, $H^* / L^*$ is an elementary abelian $p$-group and, since $\ord {H / L} = \ord {H^* / L^*}$, these quotients are isomorphic elementary abelian $p$-groups. 

If $H = G$ and $\Z G = L$ then $G / \Z G$ is an elementary abelian $p$-group.  Thus, for $x, y \in G$, we have $[x,y]^p = [x^p,y] = 1$; therefore $[G,G]$ is elementary abelian. \end{proof}

\begin{thm}\label{w} Let $G$ be a group such that $L, H \in \CD G$ with the interval $\llbracket L, H \rrbracket$ in $\CD G$ a quasi-antichain of width $w \geq 3$.  There exists a prime $p$ and positive integers $a, b$ with $b \leq a$ such that $H / L$ and $C_G(L) / C_G(H)$ are elementary abelian $p$-groups of order $p^{2a}$ and $w = p^b + 1$.\end{thm}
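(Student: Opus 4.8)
The plan is to reduce the statement to a question about partial spreads of subspaces; the ``elementary abelian $p$-group of order $p^{2a}$'' part is almost immediate. By Propositions~\ref{basic} and~\ref{centralizing}, $H/L$ and $C_G(L)/C_G(H) = H^*/L^*$ are isomorphic elementary abelian $p$-groups with $\ord{H/L} = \ord{H/K_1}^2$; as $L < K_1 < H$ we may set $p^a = \ord{H/K_1} = \ord{K_1/L}$ with $a \geq 1$, so $\ord{H/L} = p^{2a}$. It remains to find $b$ with $1 \leq b \leq a$ and $w = p^b + 1$. View $V := H/L$ as a $2a$-dimensional vector space over $\mathbb{F}_p$. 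By Proposition~\ref{basic} each $K_i/L$ is $a$-dimensional, and since $\llbracket L, H \rrbracket$ is a quasi-antichain we have $K_iK_j = H$ and $K_i \cap K_j = L$ for $i \neq j$; thus $K_1/L, \dots, K_w/L$ are pairwise complementary $a$-subspaces of $V$.

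Now I would coordinatize this configuration. Fix two atoms $K_1, K_2$ and set $X_1 = K_1/L$, $X_2 = K_2/L$, so $V = X_1 \oplus X_2$. For $i \neq 2$, $K_i/L$ is a complement of $X_2$ meeting $X_1$ trivially, so it is the graph $\Gamma(g_i) = \{ x + g_i(x) : x \in X_1\}$ of a unique linear isomorphism $g_i \colon X_1 \to X_2$, with $g_1 = 0$; a choice of bases turns each $g_i$ into an element of $M_a(\mathbb{F}_p)$. The pairwise-complementary condition becomes precisely: $g_i - g_j$ is invertible whenever $i \neq j$ (with $K_2$ playing the role of ``$g = \infty$''). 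So $\mathcal{S} := \{ g_i : 1 \leq i \leq w,\ i \neq 2\}$ is a set of exactly $w - 1$ matrices that contains $0$ and has all pairwise differences invertible. The crux of the proof is to show that $\mathcal{S}$ is closed under addition. Granting this, $\mathcal{S}$ is an additive subgroup of $(M_a(\mathbb{F}_p), +)$ and hence, having exponent $p$, an $\mathbb{F}_p$-space, so $\ord{\mathcal{S}} = p^b$; here $b \geq 1$ because $w \geq 3$ forces $\ord{\mathcal{S}} \geq 2$, and $b \leq a$ because for fixed $0 \neq v \in X_1$ the additive map $g \mapsto g(v)$ on $\mathcal{S}$ is injective (a nonzero $g \in \mathcal{S}$ with $g(v) = 0$ would not be invertible, yet $g = g - 0$ must be). Hence $w = \ord{\mathcal{S}} + 1 = p^b + 1$ with $1 \leq b \leq a$, and since $H^*/L^* \iso H/L$ and the dual interval $\llbracket L^*, H^*\rrbracket$ is a quasi-antichain of the same width $w$, the assertions about $C_G(L)/C_G(H)$ follow immediately.

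The step I expect to be the real obstacle is the additive closure of $\mathcal{S}$, since an arbitrary partial spread set need not have cardinality a power of $p$; this is exactly where the Chermak-Delgado measure, not merely the incidence combinatorics, must enter. The tool is the commutator pairing. By Proposition~\ref{centralizing}, $[H, H^*] \leq L \cap L^* = C_G(HH^*)$, so $(xL, yL^*) \mapsto [x,y]$ is a well-defined bilinear map $\beta \colon H/L \times H^*/L^* \to [H, H^*]$, nondegenerate on each side because $C_G(H^*) = L$ and $C_G(H) = L^*$. For $L \leq X \leq H$ we have $C_G(X) \leq C_G(L) = H^*$ with $C_G(X)/L^* = (X/L)^\perp$, the $\beta$-perpendicular in $H^*/L^*$; combined with $m^*(G) = m(H) = \ord{H/L}\,\ord{L}\,\ord{L^*}$ this gives
$$
X \in \CD G \iff \ord{X/L}\cdot\ord{(X/L)^\perp} = \ord{H/L} \iff \dim(X/L) + \dim(X/L)^\perp = 2a .
$$
Using this characterization, Proposition~\ref{omegaspt1}, and the fact that the perpendiculars $(K_i/L)^\perp = C_G(K_i)/L^*$ themselves form a partial spread in $H^*/L^*$, I would show that for atoms $K_i, K_j$ the graph $\Gamma(g_i + g_j)$ -- which is built from $K_i/L$, $K_j/L$ and $X_1$ by means of the projection of $V$ onto $X_1$ along $X_2$ -- corresponds to a subgroup $Y$ with $L \leq Y \leq H$ and $\dim(Y/L) + \dim(Y/L)^\perp = 2a$, i.e.\ $Y \in \CD G$. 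Since $\dim(Y/L) = a$, the subgroup $Y$ is neither $L$ nor $H$, hence is one of the atoms, and therefore $g_i + g_j \in \mathcal{S}$, completing the reduction.
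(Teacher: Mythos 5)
Your reduction is, modulo language, the paper's own proof: your slopes $g_i$ are the paper's isomorphisms $\overline{\beta_i}\colon K_1/L \to K_2/L$ (with $K_i/L$ the graph $\{k\beta_i(k)L\}$), your $\Gamma(g_i+g_j)$ is the paper's subgroup $K_{i,j} = \{k\beta_i(k)\beta_j(k) \mid k \in K_1\}L$, and your injective evaluation $g \mapsto g(v)$ on $\mathcal{S}$ is the paper's transversal $\Lambda = \{\beta_i(k_1)\}$ sitting inside $K_2$. The spread-set packaging is a clean conceptual framing, and your perpendicular-dimension criterion for membership in $\CD G$ is correct, but this is not a different route.

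The one place you stop short --- ``I would show that $\Gamma(g_i+g_j)$ corresponds to a subgroup $Y \in \CD G$'' --- is exactly where the paper does its only real work, and it is worth being clear that this is not a formal consequence of the setup. The pairing $\beta\colon H/L \times H^*/L^* \to [H,H^*]$ lands in an elementary abelian group that need not be cyclic, so nondegeneracy only gives $\dim X + \dim X^{\perp} \leq 2a$ for $L \leq X \leq H$ (that inequality is just the maximality of the measure); the reverse inequality for $X/L = \Gamma(g_i+g_j)$ is precisely what must be proved, and it requires exhibiting an $a$-dimensional perpendicular subspace by hand. The candidate is the dual sum: write $H^*/L^* = U_1 \oplus U_2$ with $U_r = K_r^*/L^*$, let $h_i\colon U_1 \to U_2$ be the slope of $K_i^*/L^* = (K_i/L)^{\perp}$ (a graph over $U_1$ because $K_i^* \cap K_1^* = K_i^* \cap K_2^* = L^*$ in the dual quasi-antichain), and check $\Gamma(h_i+h_j) \perp \Gamma(g_i+g_j)$. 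Expanding $0 = \beta(x+g_ix,\,u+h_iu)$ using $\beta(X_1,U_1) = \beta(X_2,U_2) = 0$ gives $\beta(x,h_iu) = -\beta(g_ix,u)$, whence $\beta\bigl(x+(g_i+g_j)x,\ u+(h_i+h_j)u\bigr) = \beta(x,h_iu)+\beta(g_ix,u)+\beta(x,h_ju)+\beta(g_jx,u) = 0$. This two-line computation is the paper's central calculation with the $K_{i,j}^*$; once you insert it, your argument is complete and coincides with the paper's. (Proposition~\ref{omegaspt1}, which you cite at this point, is not actually what is needed here; the paper uses it only to get that $H/L$ is elementary abelian.)
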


\begin{proof} The existence of the prime $p$ and the fact that $H / L$ and $H^* / L^*$ are elementary abelian $p$-groups were established in Proposition~\ref{centralizing}. From Proposition~\ref{basic}, we know $H / L = K_1/L \times K_2 / L$.  Let $i \geq 3$; the subgroup $K_i / L$ projects onto each coordinate under the natural projection maps and intersects each of $K_1 / L$ and $K_2 / L$ trivially.  Thus $K_i / L$ is a subdirect product and there exists an isomorphism $\overline{\beta_i}: K_1 / L \rightarrow K_2 / L$ such that  $K_i / L = \{ (kL) \overline{\beta_i} (kL) \mid k \in K_1\}$.  Choose $\beta_i(k) \in K_2$ with $\overline{\beta_i}(kL) = \beta_i(k)L$; then $K_i / L = \{ k\beta_i(k)L \mid k \in K_1\}$.  Similarly, there exists an isomorphism $\overline{\alpha_i}: K_1^* / L^* \rightarrow K_2^* / L^*$ where $\overline{\alpha_i}(mL^*) = \alpha_i(m) L^*$ for each $m \in K_1^*$ and $K_i^*/L^* = \{ m\alpha_i(m)L^* \mid m \in K_1^* \}$. 

For $i,j$ such that $3 \leq i,j \leq w$, let $\Delta_{i,j} = \{ k \beta_i (k) \beta_j (k) \mid k \in K_1 \}$ and $\Delta_{i,j}^* = \{ m \alpha_i (m) \alpha_j (m) \mid m \in K_1^* \}$; additionally define $K_{i,j} = \Delta_{i,j} L$ and $K_{i,j}^* = \Delta_{i,j}^* L^*$.  Since $[K_1,K_2] \leq L$ and the functions $\overline{\beta_i}$, $\overline{\beta_j}$, are homomorphisms, it follows that $K_{i,j} \leq H$.  Also observe that if $k\beta_i(k)\beta_j(k)L = k' \beta_i(k') \beta_j(k')L$ then $kL = k'L$, because $K_1 \cap K_2 = L$.  Therefore $\ord {K_{i,j} / L} = \ord {K_1/L}$.  Corresponding facts are true regarding $K_{i,j}^*$.  

Our goal is to show that $K_{i,j}$ is one of the atoms in $\llbracket L, H \rrbracket$, so we calculate $m(K_{i,j})$.  From the definitions, clearly $[k_1,m_1] = [k_2,m_2] = 1$ when $k_i \in K_i$ and $m_i \in K_i^*$ for $i = 1, 2$.  By this information and the fact that $[H,H^*]$ is centralized by $H$ and $H^*$, if $k \in K_1$ and $m \in K_1^*$ then we obtain 
$$
1 = [k\beta_i(k),m\alpha_i(m)] = [k,\alpha_i(m)][\beta_i(k),m]
$$
for all $i$ such that $3 \leq i \leq w$.  Given $k \in K_1$ and $m \in K_1^*$, if $3 \leq i,j \leq w$ then
$$
\begin{array}{l}
[k  \beta_i(k) \beta_j(k), m \alpha_i(m) \alpha_j(m)]\\
\quad = [k,\alpha_j(m)][\beta_i(k),\alpha_j(m)][\beta_j(k),\alpha_i(m)][\beta_j(k),m]\\
\quad = [k,\alpha_j(m)] [\beta_j(k),m]\\
\quad = 1.\\
\end{array}
$$
By $[H,L^*] = [H^*, L] = 1$ and the above calculation, $K_{i,j}^* \leq C_G(K_{i,j})$. Because $\ord {K_{i,j}} = \ord {K_1}$ and $\ord {K_{i,j}^*} = \ord {K_1^*}$, therefore $m(K_{i,j}) = m(K_1)$ and $K_{i,j} \in \CD G$.  Thus for each $i,j$ with $3 \leq i,j \leq w$, either $K_{i,j} = K_h$ for some $h$ such that $3 \leq h \leq w$ or $K_{i,j} = K_1$. Setting $\beta_2(k) = 1$ for all $k \in K_1$, it follows that $\{ k \beta_i(k) \beta_j(k) L \mid k \in K_1 \} = \{ k \beta_h (k) L \mid k \in K_1 \}$ for some $h$ with $2 \leq h \leq w$.  Notice that if $k \beta_i(k) \beta_j(k)L = k' \beta_h(k')L$ then $kL = k'L$ and $\beta_i(k)\beta_j(k)L = \beta_h(k)L$, because $K_1 \cap K_2 = L$.

Fix a $k_1 \in K_1 \setminus L$.  Let $\Lambda = \{\beta_2(k_1), \beta_3(k_1), \dots \beta_w(k_1) \}$ and $R = \Lambda \cdot L$. By what we have shown in the preceding paragraphs, $R \leq H$ and, as $K_i \cap K_j = L$ for $i \ne j$, the set $\Lambda$ is a transversal for $L$ in $R$. Hence $\ord {R / L} = \ord {\Lambda} = w - 1$.  Since $R \leq K_2$, it follows that $w-1$ divides $p^a$. \end{proof}

\section{Quasi-antichain Chermak-Delgado Lattices}\label{antichains}

We study here the groups $G$ such that $\CD G$ is a quasi-antichain and $G \in \CD G$, meaning that $G = H = H^*$ and $\Z G = L = L^*$ in the notation of the first section.  Additionally, the subgroups $K_i^*$ are now atoms of $\llbracket L, H \rrbracket$; notice $K_i^* = K_i$ if and only if $K_i$ is abelian. 

When studying groups of this type, the added condition that $[G,G]$ be cyclic imposes very strong restrictions on the structure of the group, as seen in the next proposition.  

\begin{prop}\label{xspec} Let $G \in \CD G$ and $[G,G]$ be cyclic. Then $\CD G$ is a quasi-antichain of width $w \geq 3$ with $G \in \CD G$ if and only if there exists a prime $p$ such that $\ord {[G,G]} = p$ and $G / \Z G \iso C_p \times C_p$. In this case $w = p + 1$. \end{prop}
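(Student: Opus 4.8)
The plan is to prove the two directions separately, with the forward direction being the substantive one. For the reverse direction, suppose $p$ is a prime with $\ord{[G,G]} = p$ and $G/\Z G \iso C_p \times C_p$. I would first check $G \in \CD G$: since $G/\Z G$ is abelian, $G$ has nilpotence class $2$, and for such a group one computes directly that $m^*(G) = m(G) = \ord G \ord{\Z G}$ (the centralizer of any subgroup properly containing $\Z G$ cannot be larger than $\Z G$ because $[G,G]$ has order $p$ and $G/\Z G$ has rank $2$). Then I would invoke Proposition~\ref{centralizing}: with $G = H = H^*$ and $\Z G = L = L^*$, the hypotheses of Proposition~\ref{omegaspt1} are met (class $2$, so $[GG, GG] = [G,G] \leq \Z G$), so $\CD G$ consists of $\Z G$, the subgroups strictly between them, and $G$. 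The subgroups $K$ with $\Z G < K < G$ and $K \in \CD G$ correspond to index-$p$ subgroups of $G/\Z G \iso C_p \times C_p$, of which there are exactly $p+1$; each such $K$ satisfies $m(K) = \ord K \ord{\Z G} \cdot (\text{something})$—more precisely, $C_G(K) = K$ when $K$ is abelian of index $p$, or one checks $m(K) = m(G)$ directly. So $\CD G$ is a quasi-antichain with $p+1 \geq 3$ atoms, giving $w = p+1$.

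For the forward direction, assume $\CD G$ is a quasi-antichain of width $w \geq 3$ with $G \in \CD G$ and $[G,G]$ cyclic. By Proposition~\ref{centralizing}, $G/\Z G$ and $[G,G]$ are elementary abelian $p$-groups for some prime $p$; since $[G,G]$ is both cyclic and elementary abelian, $\ord{[G,G]} = p$. It remains to show $G/\Z G \iso C_p \times C_p$, i.e. that $G/\Z G$ has rank exactly $2$. By Theorem~\ref{w}, $\ord{G/\Z G} = p^{2a}$ for some positive integer $a$, so the rank is $2a$; I must show $a = 1$. The commutator map induces a well-defined nondegenerate alternating bilinear form $G/\Z G \times G/\Z G \to [G,G] \iso C_p$ (nondegeneracy because $\Z G$ is exactly the center). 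Nondegeneracy forces the rank to be even—consistent with $2a$—but more importantly, since the target is one-dimensional over $\mathbb{F}_p$, this is a single symplectic form on an $\mathbb{F}_p$-space of dimension $2a$. The key point: for such a form with $a \geq 2$, there exist "large" totally isotropic... no, rather, there exist subgroups $K/\Z G$ of codimension $1$ that are \emph{not} self-centralizing in the form sense, which would produce members of $\CD G$ strictly between two atoms, contradicting the quasi-antichain structure. Concretely, I would take a hyperplane $U$ of $G/\Z G$ containing a nonzero vector in its own radical (possible iff $2a \geq 4$, i.e. $a \geq 2$, since a hyperplane has a nontrivial radical exactly when it is not "nondegenerate", which for a symplectic space happens precisely for the $\perp$ of a line that lies inside the hyperplane—always possible when $\dim \geq 4$); pulling back to $K \leq G$, one gets $\Z G < \Z K$ properly, hence $K$ properly contains a member...

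Let me instead use the atom count directly, which is cleaner. By Theorem~\ref{w}, $w = p^b + 1$ with $b \leq a$, and separately the atoms $K$ with $K \in \CD G$ and $\Z G < K < G$ must all have the same order (Proposition~\ref{basic}), with $\ord{G/K}^2 = \ord{G/\Z G} = p^{2a}$, so $\ind G K = p^a$ and each atom is a subgroup of index $p^a$ in $G$ containing $\Z G$, with $C_G(K) = $ another such subgroup. But an atom $K$ with $C_G(K) = K^*$ satisfies $K K^* = G$ and $K \cap K^* = \Z G$; translating to the symplectic space, $K/\Z G$ and $K^*/\Z G$ are complementary subspaces each equal to its own perp-intersection-hyperplane image... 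The number of index-$p^a$ subgroups $K$ of $G$ with $K = C_G(C_G(K))$ and the pairing structure of a quasi-antichain forces, by a counting argument on the symplectic space, that $a = 1$: when $a \geq 2$ one can exhibit a member of $\CD G$ that is neither the top, the bottom, nor an atom (e.g. an intersection $K_1 \cap K_2$ of two atoms that is strictly larger than $\Z G$, which happens as soon as $\dim(K_1/\Z G) + \dim(K_2/\Z G) - 2a \cdot [\text{codim}] > 0$, i.e. whenever $a \geq 2$ since then two index-$p^a$ subgroups can intersect above $\Z G$). That contradiction with the quasi-antichain property yields $a = 1$, hence $G/\Z G \iso C_p \times C_p$ and $w = p^b + 1 = p + 1$ since $1 \leq b \leq a = 1$.

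\textbf{Main obstacle.} The hard part is the forward direction's reduction to $a = 1$: ruling out $a \geq 2$ requires producing a member of $\CD G$ outside $\{\Z G, G\} \cup \{\text{atoms}\}$, and the natural candidates (intersections of atoms, or preimages of degenerate hyperplanes) need the symplectic-geometry bookkeeping to confirm they genuinely lie in $\CD G$ and are genuinely "in the middle." I expect one must either (i) carefully compute $m(K_1 \cap K_2)$ for two well-chosen atoms and show it equals $m^*(G)$ while $\Z G < K_1 \cap K_2 < K_i$, or (ii) exploit that $[G,G]$ having order $p$ makes $G$ "ultraspecial-like," so that the atoms are in bijection with a structure (maximal isotropic subspaces, or all index-$p$ subgroups of $G/\Z G$ when $a=1$) whose cardinality is $p+1$ only when $a = 1$ — any larger $a$ gives either too many atoms or forces intermediate subgroups. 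Getting this geometric dichotomy exactly right, rather than hand-waving "large dimension gives intermediate subgroups," is where the real work lies.
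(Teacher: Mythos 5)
Your reverse direction is essentially right, though it is overbuilt: $G \in \CD G$ is a standing hypothesis of the proposition, so there is nothing to check there, and you do not need Propositions~\ref{centralizing} or~\ref{omegaspt1}. Every $U$ with $\Z G < U < G$ has $U/\Z G$ cyclic of order $p$, hence is abelian and self-centralizing, so $m(U) = \vert U \vert^2 = p^2 \vert \Z G \vert^2 = m(G) = m^*(G)$, which gives exactly the $p+1$ atoms; this is what the paper does. The forward direction, however, has a genuine gap at precisely the point you flag: the reduction to $a = 1$. The alternative you settle on --- ``an intersection $K_1 \cap K_2$ of two atoms that is strictly larger than $\Z G$ \dots whenever $a \geq 2$'' --- fails. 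Since $\CD G$ is closed under intersection, two distinct atoms of a quasi-antichain always meet in the bottom element $\Z G$, and the dimension count permits this: two $a$-dimensional subspaces of a $2a$-dimensional symplectic space can perfectly well be complementary (e.g.\ a maximal isotropic subspace and a complement). So no contradiction arises from that candidate, and as written your argument does not establish $a=1$.

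The idea you abandoned mid-sentence is the one that works, and it closes quickly. Because $\vert [G,G] \vert = p$ and $[G,G] \leq \Z G$, the commutator induces a nondegenerate alternating form on $G/\Z G$ with values in $C_p$. For any maximal subgroup $K$ of $G$ (so $K/\Z G$ is a hyperplane $W$), nondegeneracy gives $\dim W^\perp = 1$, and $C_G(K)$ is the preimage of $W^\perp$, of order $p\vert \Z G \vert$. Hence $m(K) = \frac{\vert G \vert}{p} \cdot p \vert \Z G \vert = \vert G \vert \vert \Z G \vert = m^*(G)$, so $K \in \CD G$; since $\Z G < K < G$, it is an atom, but Proposition~\ref{basic} forces atoms to have index $p^a$ with $\vert G/K \vert^2 = \vert G/\Z G \vert$, i.e.\ $p^2 = p^{2a}$ and $a = 1$. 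This is exactly the paper's route, except that the paper gets ``every maximal subgroup is a centralizer of some $U > \Z G$'' by citing a Satz of Reuther for groups whose derived subgroup has prime order, and then runs the same measure computation. You had the right mechanism in hand; you needed to commit to it and observe that a proper intermediate member of $\CD G$ of index $p$ is incompatible with atoms of index $p^a$ unless $a = 1$.
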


\begin{proof} Let $[G,G]$ is cyclic and $G \in \CD G$.   Suppose first that $\CD G$ is a quasi-antichain of width $w \geq 3$.  We know that there exists a prime $p$ such that $G / \Z G$ and $[G,G]$ are elementary abelian $p$-groups by Proposition~\ref{centralizing}.  Therefore $[G,G]$ has order $p$ but, more importantly, all $U \leq G$ such that $\Z G \leq U$ are centralizers by \cite[Satz]{reuther77}.  In particular, a maximal subgroup $M < G$ is a centralizer so there exists $U > \Z P$ with $M = C_G(U)$ and $m(M) = \frac{\ord G}{p} \ord {C_G(X)} \geq \ord {G} \ord {\Z G}$.  Yet $G \in \CD G$, so $M, U \in \CD G$.  The Chermak-Delgado lattice of $G$ is a quasi-antichain of width at least 3 so by Proposition~\ref{basic}, $\ord M = \ord U$ and thus $\ord {G / \Z P} = p^2$.

Now suppose there exists a prime $p$ such that $\ord {[G,G]} = p$ and $G / \Z G \iso C_p \times C_p$.  In this case, all $p + 1$ subgroups $U$ such that $\Z G < U < G$ are abelian, have order $p \ord {\Z G}$, and have measure $p^2 {\ord {\Z G}}^2$, which also equals the measure of $G$.  Therefore $\CD G = \{ U \leq G \mid \Z G \leq U\}$ is a quasi-antichain of width $p + 1$ with $G \in \CD G$.  \end{proof}

The next theorem justifies our attention on $p$-groups while studying groups with a quasi-antichain Chermak-Delgado lattice.  The proof of Theorem~\ref{pgrp} requires the observation: Let $M$ and $N$ be any pair of finite groups. The modularity of the Chermak-Delgado lattice implies that every maximal chain in the lattice has the same length.  For example, all maximal chains in $\CD G$ have  length 2 because $\CD G$ is a quasi-antichain.  That $\CD {M \times N} \iso \CD M \times \CD N$ as lattices \cite{BW2012} gives that the length of a maximal chain in $M \times N$ is the sum of the lengths of maximal chains in $M$ and $N$.  

\begin{thm}\label{pgrp} If $G$ is a group with $\CD {G}$ a quasi-antichain of width $w \geq 3$ and $G \in \CD G$ then $G$ is nilpotent of class 2; in fact, there exists a prime $p$, a nonabelian Sylow $p$-subgroup $P$ with nilpotence class 2, and an abelian Hall $p'$-subgroup $Q$ such that $G = P \times Q$, $P \in \CD P$, and $\CD G \iso \CD P$ as lattices. Moreover there exist positive integers $a,b$ with $b \leq a$ such that $\ord {G / \Z G} = \ord {P / \Z P} = p^{2a}$ and $w = p^b + 1$. \end{thm}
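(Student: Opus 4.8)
The plan is to first reduce to the case that $G$ is nilpotent, then split off the $p'$-part, and finally read off the numerical data from Theorem~\ref{w}. For nilpotency, I would use the structural information already assembled: since $w \geq 3$ and $G = H = H^* \in \CD G$, Proposition~\ref{centralizing} gives that $G/\Z G$ is an elementary abelian $p$-group, hence $[G,G] \leq \Z G$ and $G$ has nilpotence class at most $2$. Because $G$ is nonabelian (a quasi-antichain of width $\geq 3$ is not a chain, so $\CD G \neq \{G\}$, forcing $\Z G < G$ and in fact $G$ nonabelian since $G/\Z G$ cannot be cyclic), the class is exactly $2$. A class-$2$ group is nilpotent, so $G = P \times Q$ with $P$ the Sylow $p$-subgroup and $Q$ the abelian Hall $p'$-complement; I still need to justify that $Q$ is abelian and that only one prime is involved in the nonabelian part.

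The key tool here is the lattice isomorphism $\CD{G} \iso \CD{P} \times \CD{Q}$ from \cite{BW2012}, together with the remark preceding the theorem that the length of a maximal chain in $\CD{M\times N}$ is the sum of the lengths in $\CD M$ and $\CD N$. Since every maximal chain in the quasi-antichain $\CD G$ has length exactly $2$, writing $G$ as the direct product of its Sylow subgroups $G = P_1 \times \cdots \times P_r$ forces the maximal chain lengths in the $\CD{P_i}$ to sum to $2$. A group whose Chermak-Delgado lattice is a single point (chain length $0$) is exactly an abelian group — more precisely, $\CD{P_i} = \{P_i\}$ when $\Z{P_i} = P_i$ — so at most two of the factors can be nonabelian, and if two were nonabelian each would contribute a chain of length $1$, i.e. $\CD{P_i}$ a chain of length $1$, meaning $\CD{P_i} = \{ \Z{P_i}, P_i\}$ with $\ind{P_i}{\Z{P_i}}$ prime. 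But then $\CD G$ would be a product of two nontrivial chains, which is a ``square'' lattice, not a quasi-antichain of width $\geq 3$ (a quasi-antichain of width $\geq 3$ has a unique coatom-free structure incompatible with being a direct product of two chains of length $1$, whose width as a lattice interval is $2$). Hence exactly one Sylow subgroup $P$ is nonabelian, all others are abelian, and $\CD G \iso \CD P$; the nonabelian $P$ has class $2$ as a subgroup-closed property inherited from $G$ having class $2$, and $P \in \CD P$ follows from $G \in \CD G$ under the product decomposition of the lattice.

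Finally, with $G = P \times Q$ and $\CD G \iso \CD P$, the interval $\llbracket \Z G, G \rrbracket$ being a quasi-antichain of width $w$ transports to $\llbracket \Z P, P\rrbracket$ being a quasi-antichain of width $w$ in $\CD P$ with $P \in \CD P$; here $L = L^* = \Z P$ and $H = H^* = P$. Applying Theorem~\ref{w} to $P$ yields a prime $p'$ and positive integers $a,b$ with $b \leq a$ such that $P/\Z P$ is elementary abelian of order $(p')^{2a}$ and $w = (p')^b + 1$; since $P/\Z P$ is a $p$-group this prime $p'$ is the prime $p$ dividing $\ord P$, and $\ord{G/\Z G} = \ord{P/\Z P} = p^{2a}$ because $Q$ is abelian and central in the complement. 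The main obstacle I anticipate is the lattice-theoretic step ruling out \emph{two} nonabelian Sylow factors: one must argue carefully that a direct product of two chains of length $1$ cannot be a quasi-antichain of width $\geq 3$ — this is where the precise combinatorial meaning of ``quasi-antichain'' (a maximum, a minimum, and atoms, all atoms being both atoms and coatoms) must be checked against the four-element ``diamond'' $\{0, a, b, 1\}$, which happens to be a quasi-antichain of width $2$ but never of larger width, so the width-$\geq 3$ hypothesis is exactly what forecloses this case.
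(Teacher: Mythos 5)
Your argument is essentially the paper's proof: both deduce class $2$ (hence nilpotency) from Proposition~\ref{centralizing}, both use the lattice isomorphism $\CD{M \times N} \iso \CD{M} \times \CD{N}$ together with additivity of maximal chain length to rule out two nonabelian factors (two would force each $\CD{P_i}$ to be a chain of length $1$, making $\CD G$ a quasi-antichain of width $2$, contradicting $w \geq 3$), and both then invoke Theorem~\ref{w} for the numerical conclusions. One harmless slip: a maximal chain of length $1$ in $\CD{P_i}$ with $P_i \in \CD{P_i}$ means $\CD{P_i} = \{\Z{P_i}, P_i\}$, but it does not make $\ind{P_i}{\Z{P_i}}$ prime (that would force $P_i/\Z{P_i}$ cyclic and $P_i$ abelian); fortunately you never use that parenthetical claim.
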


\begin{proof} Note that $G$ is nilpotent, by Proposition~\ref{centralizing}, but nonabelian and the length of a maximal chain in $\CD G$ is 2. If $G = Q_1 \times Q_2$ where $Q_1$ and $Q_2$ are Hall $\pi$-, $\pi'$-subgroups of $G$, respectively, then $\CD G \iso \CD {Q_1 \times Q_2}$; as a consequence of the additivity of chain length over a direct product, if both $Q_1$ and $Q_2$ are nonabelian then $\CD {Q_i} = \{Q_i, \Z {Q_i}\}$ for $i = 1, 2$. However, this implies $\CD {Q_1 \times Q_2}$ is a quasi-antichain of width 2.  Consequently, exactly one of $Q_1$ or $Q_2$ is abelian and the Chermak-Delgado lattice of the nonabelian factor is isomorphic (as lattices) to $\CD G$.  Therefore there exists a unique prime $p$ such that $G = P \times Q$ where $P$ is a nonabelian Sylow $p$-subgroup of $G$ and $Q$ is an abelian Hall $p'$-subgroup of $G$, with $\CD G \iso \CD P$ as lattices. The rest follows from Proposition~\ref{basic} and Theorem~\ref{w}.\end{proof}

We investigated the number of abelian atoms that is permitted in a quasi-antichain Chermak-Delgado lattice.  The final theorem of this section records our contributions in this direction.

\begin{thm}\label{s>1} Let $G$ be a $p$-group with $\CD G$ a quasi-antichain of width $w \geq 3$ and suppose $\ord {G / \Z G} = p^{2a}$ for a positive integer $a$.  Let $t$ be the number of abelian atoms in $\CD G$ and $u$ be the number of pairs of nonabelian atoms. 
	\begin{enumerate}
	\item If $t = 0$ then $p$ is odd.
	\item If $t = 1$ then $p = 2$.
	\item If $t \geq 2$ then there exists a positive integer $c \leq a$ such that $t = p^c + 1$. In particular, $p - 1$ divides $t - 2$; if $p$ is odd then $p^c$ divides $u$ and if $p = 2$ then $t \geq 3$ and $2^{c-1}$ divides $u$.
	\item If $t \geq 2$ and $u \geq 1$ then $3 \leq t \leq 2u + 1$ when $p = 2$ and $2 \leq t \leq u + 1$ when $p$ is odd.
	\item If $t \geq 3$ then $t \geq p + 1$.
	\end{enumerate}\end{thm}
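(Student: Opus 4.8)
The plan is to reduce the whole theorem, except for one borderline case, to the self-duality of $\CD G$ together with a description of the abelian atoms as an $\mathbb{F}_p$-subspace, and then to isolate that case as the real work. First I would observe that $K_i\mapsto K_i^{*}=C_G(K_i)$ is an involution of the set of $w$ atoms (since $C_G(C_G(K_i))=K_i$) whose fixed points are exactly the abelian atoms; hence the $w-t$ non-abelian atoms split into $u$ two-element orbits, so $w=t+2u$. Since Theorem~\ref{w} gives $w=p^{b}+1$ with $1\le b\le a$, parts (1) and (2) drop out: $t=0$ makes $2u=p^{b}+1$ even, so $p$ is odd; $t=1$ makes $2u=p^{b}$, so $p=2$. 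Granting the core of part (3) — that $t-1$ is a power of $p$ dividing $p^{b}$ — parts (4) and (5) are elementary arithmetic: $t-2=p^{c}-1$ is a multiple of $p-1$; from $2u=w-t=p^{b}-p^{c}=p^{c}(p^{b-c}-1)$ one reads off $p^{c}\mid u$ when $p$ is odd and $2^{c-1}\mid u$ when $p=2$; if $u\ge1$ then $b>c$, so $2u=p^{c}(p^{b-c}-1)\ge p^{c}(p-1)$, which gives the bounds of (4); and $t\ge3$ with $t-1$ a prime power forces $t-1\ge p$, which is (5).

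For the core of (3) I would fix two abelian atoms $K_1,K_2$. By Proposition~\ref{basic}, $H/L=K_1/L\times K_2/L$, and since distinct atoms meet in $L$ and have product $H$, every atom other than $K_2$ is the graph $\Gamma_{\beta}=\{\,k\,\beta(k)L:k\in K_1\,\}$ of a homomorphism $\overline{\beta}\colon K_1/L\to K_2/L$, with $K_1=\Gamma_{0}$; and — just as in the proof of Theorem~\ref{w} — the set $\mathcal{T}$ of all these $\overline{\beta}$ is a subgroup of $\operatorname{Hom}(K_1/L,K_2/L)$ of order $w-1=p^{b}$. Writing $c(xL,yL)=[x,y]$ for the commutator pairing $K_1/L\times K_2/L\to[G,G]$ (bilinear, with no one-sided kernel as each $K_i$ is self-centralizing), a direct computation (using $[G,G]\le\Z G$) shows that $\Gamma_{\beta}$ is abelian precisely when the form $(x,y)\mapsto c(x,\overline{\beta}y)$ on $K_1/L$ is symmetric; since this is a linear condition on $\overline{\beta}$, the set $\mathcal{B}:=\{\,\overline{\beta}\in\mathcal{T}:\Gamma_{\beta}\text{ abelian}\,\}$ is an $\mathbb{F}_p$-subspace of $\mathcal{T}$. (The same fact is visible from the diagonal construction in the proof of Theorem~\ref{w}: for abelian $K_i,K_j$, choosing $K_1,K_2$ abelian gives $K_h^{*}=K_h$ for $h\in\{1,2,i,j\}$, hence $\overline{\alpha_i}=\overline{\beta_i}$, $\overline{\alpha_j}=\overline{\beta_j}$, and therefore $K_{i,j}^{*}=K_{i,j}$.) Since the abelian atoms are exactly $K_2$ together with $\{\Gamma_{\beta}:\overline{\beta}\in\mathcal{B}\}$, we get $\ord{\mathcal{B}}=t-1$; as $\mathcal{B}\le\mathcal{T}$ this divides $p^{b}$, so $t-1=p^{c}$ with $0\le c\le b\le a$. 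In particular, if $t\ge3$ then $\mathcal{B}\ne0$, so $\ord{\mathcal{B}}\ge p$ and $c\ge1$, which both re-proves (5) and establishes (3) whenever $t\ge3$.

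What remains — and the step I expect to be the main obstacle — is to exclude $t=2$, i.e. $\mathcal{B}=\{0\}$; note this can occur only for $p$ odd, since $w=2+2u=p^{b}+1$ forces $p^{b}$ odd. Assuming $\mathcal{B}=\{0\}$, the perpendicularity map $\Gamma_{\beta}\mapsto C_G(\Gamma_{\beta})$ restricts to an additive involution of $\mathcal{T}$ with trivial fixed subgroup, which for $p$ odd must be $-\operatorname{id}$; so $C_G(\Gamma_{\beta})=\Gamma_{-\overline{\beta}}$ for all $\overline{\beta}\in\mathcal{T}$, equivalently $(x,y)\mapsto c(x,\overline{\beta}y)$ is alternating for every $\overline{\beta}\in\mathcal{T}$. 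Transporting by a fixed $\overline{\beta_0}\in\mathcal{T}\setminus\{0\}$ (an isomorphism), every element of the additive subgroup $\overline{\beta_0}^{-1}\mathcal{T}\le\operatorname{End}(K_1/L)$ — all of whose nonzero elements are invertible — becomes self-adjoint for the nondegenerate alternating $[G,G]$-valued form $(x,y)\mapsto c(x,\overline{\beta_0}y)$. I would then aim to contradict the coexistence of this rigidity with the requirement that the remaining $p^{b}-1$ non-abelian atoms still form a spread — in effect, to show that a commutator pairing supporting an invertible, self-adjoint additive ``spread set'' of endomorphisms must also support a second totally isotropic graph, forcing $\mathcal{B}\ne\{0\}$. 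Everything else in the theorem reduces cleanly to the subgroup $\mathcal{B}\le\mathcal{T}$ and plain counting; this last incompatibility is where I expect the genuine difficulty to lie.
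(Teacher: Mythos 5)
Your treatment of parts (1), (2), (4), (5) and of part (3) in the case $t\ge 3$ is correct and is in substance the paper's own argument: the paper likewise fixes two abelian atoms $K_1,K_2$, parametrizes the remaining atoms as graphs of homomorphisms $K_1/L\to K_2/L$ exactly as in the proof of Theorem~\ref{w}, and shows that the set of graphs of abelian atoms is closed under the group operation (it does this by verifying directly that $K_{i,j}$ is abelian when $K_i,K_j$ are, using that $\alpha_i$ and $\beta_i$ differ only by central elements when the relevant atoms are abelian; your reformulation via symmetry of the form $(x,y)\mapsto [x,\overline{\beta}y]$ is equivalent, as you yourself observe). Both routes deliver the same conclusion: $t-1=\ord{\mathcal{B}}$ is a power of $p$ dividing $p^{a}$.

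The genuine gap is your final paragraph, and it is not one you can close: the case $t=2$, i.e.\ $\mathcal{B}=\{0\}$ with $p$ odd, cannot be excluded because it actually occurs. Propositions~\ref{bigex} and~\ref{bigex2} of this very paper construct groups of order $p^{9}$ whose Chermak--Delgado lattice is a quasi-antichain of width $p+1$ with exactly two abelian atoms (for $p\equiv 1$ modulo $4$ in the first construction, and for every odd prime in the second); these satisfy all hypotheses of the theorem with $t=2$ and $u=(p-1)/2\ge 1$. So any attempt to derive a contradiction from $\mathcal{B}=\{0\}$ must fail, and the ``rigidity versus spread'' incompatibility you hope for does not exist. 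What your analysis actually exposes is that the word ``positive'' in part (3) is an overstatement: the correct conclusion of the argument (yours and the paper's alike) is $t-1=p^{c}$ with $c\ge 0$, and the paper's own proof passes from ``$t-1$ divides $p^{a}$'' to ``there exists a positive integer $c$ with $t-1=p^{c}$'' without justification. With $c=0$ admitted, every remaining assertion survives: $p-1$ divides $t-2=0$; $p^{0}=1$ divides $u$; for $p=2$ the parity of $w=t+2u=2^{b}+1$ already forces $t\ne 2$, so $c\ge 1$ there and $2^{c-1}$ divides $u$; and part (4) explicitly allows $t=2$ for odd $p$. The right repair is therefore to stop at $t-1=p^{c}$, $c\ge 0$ (or to restrict the ``positive $c$'' claim to $t\ge 3$, which your subspace argument already handles), not to pursue the exclusion of $t=2$.
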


\begin{proof} Theorem~\ref{w} tells us that $w = p^b + 1$ for some positive integer $b \leq a$, but also $w = t + 2u$ as set up by the notation.  If $t = 0$ then $w = 2u = p^b + 1$, necessarily forcing $p$ to be odd.  If $t = 1$ then $2u = p^b$; clearly $p$ must equal 2 in this case.  

Suppose that $t \geq 2$; we continue here with the same notation and set up as in the proof of Theorem~\ref{w} except we add the condition that $K_1$ and $K_2$ are abelian atoms. Recall the fixed $k_1 \in K_1 \setminus L$ and that $\beta_2(k) = 1$ for all $k \in K_1$.  Set $\Gamma = \{ \beta_i(k_1) \mid 2 \leq i \leq w \textrm{ and } K_i = K_i^* \}$, a subset of $\Lambda$.  

Let $i, j \geq 3$ and assume that $K_i$ and $K_j$ are abelian atoms; we show that $K_{i,j}$ is also abelian.  We use the functions $\alpha_i$ defined in the proof of Theorem~\ref{w}.  Observe that $\alpha_i(k)$ now differs from $\beta_i(k)$ only by an element in $\Z G$, for all $k \in K_1$. The calculation below follows:
$$
[k\beta_i(k)\beta_j(k),k'\beta_i(k')\beta_j(k')] = [k\beta_i(k)\beta_j(k),k'\alpha_i(k')\alpha_j(k') ] = 1
$$
for all $k, k' \in K_1$. Therefore $K_{i,j}$ is also an abelian atom in $\CD G$. Since $K_{i,j} \ne K_2$, it follows that $\beta_(k_1)\beta_j(k_1)\Z G = \beta_h(k_1)\Z G$ for some $\beta_h(k_1) \in \Gamma$.  Consequently $\Gamma \Z G \leq K_2$ and since $\Gamma \subseteq \Lambda$, the elements of $\Gamma$ are distinct.  Therefore $\ord {\Gamma \Z G / \Z G} = \ord {\Gamma} = t-1$ divides $p^a$ and there exists a positive integer $c$ such that $t - 1 = p^c$. 

Since $t = p^c + 1$, clearly if $p = 2$ and $t \geq 2$ then $t = 2^c + 1 \geq 3$ but, for all primes $p$, it is true that $p - 1$ divides $p^c - 1 = t - 2$.  Observe, for part (5), that if $t \geq 3$ then $t \geq p + 1$ is necessary for $t - 2$ to be a multiple of $p - 1$.  To complete the proof of part (3), notice that $2u = w - t = p^c(p^{b-c} - 1)$, so that $u = \frac{1}{2} p^c (p^{b-c} - 1)$.  If $p = 2$ then $2^{c-1}$ divides $u$, and if $p$ is odd then $u$ must be divisible by $p^c$.  This completes the assertions in part (3).

Continuing with part (4), suppose that $t \geq 2$ and $u \geq 1$.  Then $b > c$, so $p^{b-c} - 1> p - 1$.  If $p$ is odd, this implies $\frac{1}{2} (p^{b-c} - 1) \geq 1$ and consequently 
$$
t = p^c + 1 \leq p^c \left(\frac{p^{b-c} - 1}{2}\right) + 1 = u + 1.
$$
If $p = 2$ then $2^{b-c} - 1 \geq 1$, so that 
$$
t = 2^c + 1 \leq 2^c(2^{b-c} = 1) + 1 = 2u + 1.
$$
Thus when $t \geq 2$ and $u \geq 1$, the inequalities asserted in part (4) of the theorem are true. \end{proof}

\begin{cor} While there exist finite groups with Chermak-Delgado lattice a quasi-antichain of width 6, there does not exist such a group with exactly 4 abelian atoms in its Chermak-Delgado lattice. \end{cor}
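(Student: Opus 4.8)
The plan is to handle the two clauses of the corollary separately. \emph{Existence.} I would exhibit an extraspecial group $G$ of order $5^{3}$ (for instance the Heisenberg group over the field with five elements). Then $[G,G] = \Z G$ has order $5$ and $G/\Z G \iso C_{5}\times C_{5}$, so Proposition~\ref{xspec} applies directly and shows that $\CD G$ is a quasi-antichain of width $5+1 = 6$ with $G \in \CD G$. (Any of the explicit width-$6$ examples produced among the constructions later in the paper would serve just as well.) This disposes of the first clause.

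\emph{Non-existence with exactly four abelian atoms.} Suppose, toward a contradiction, that $G$ is a finite group whose Chermak-Delgado lattice is a quasi-antichain of width $6$ with exactly $4$ abelian atoms. Let $M$ be the maximum member of $\CD G$. Since $\CD M = \CD G$ as sets of subgroups and $M \in \CD M$, the lattices have literally the same atoms, hence the same number of abelian atoms, so we may replace $G$ by $M$ and assume $G \in \CD G$. Because $w = 6 \geq 3$, Theorem~\ref{pgrp} applies: there is a prime $p$ with $G = P \times Q$, where $P$ is a nonabelian Sylow $p$-subgroup of class $2$ with $P \in \CD P$, $Q$ is an abelian Hall $p'$-subgroup, and $\CD G \iso \CD P$ as lattices. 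More precisely, via $\CD{P\times Q}\iso\CD P\times\CD Q$ and $\CD Q = \{Q\}$, the atoms of $\CD G$ are exactly the subgroups $K\times Q$ with $K$ an atom of $\CD P$, and $K\times Q$ is abelian precisely when $K$ is. Thus $\CD P$ is itself a quasi-antichain of width $6$ with exactly $4$ abelian atoms and $P\in\CD P$.

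Now I would apply the numerical constraints to $P$. Theorem~\ref{w} gives $6 = w = p^{b}+1$ for some positive integer $b$, which forces $p = 5$ (and $b = 1$). The number $t$ of abelian atoms of $\CD P$ equals $4$, so in particular $t \geq 2$, and Theorem~\ref{s>1}(3) then supplies a positive integer $c$ with $t = p^{c}+1 = 5^{c}+1 \geq 6$, contradicting $t = 4$. In fact the same argument shows that a quasi-antichain Chermak-Delgado lattice of width $6$ can only have $0$ or $6$ abelian atoms. This completes the proof of the corollary.

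I do not expect any genuine obstacle: the content is essentially a repackaging of Proposition~\ref{xspec}, Theorem~\ref{pgrp}, Theorem~\ref{w}, and Theorem~\ref{s>1}. The only points requiring a little care are the two reductions (first passing to the maximum member of $\CD G$ so that $G\in\CD G$, then passing to the Sylow $p$-subgroup) while checking that the number of abelian atoms is preserved at each step, together with the elementary observation that width $6$ pins down $p=5$, after which $5^{c}+1$ never equals $4$.
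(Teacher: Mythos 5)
Your proof is correct and follows essentially the same route as the paper: Proposition~\ref{xspec} for the extraspecial group of order $5^3$, then Theorem~\ref{pgrp} and Theorem~\ref{w} to reduce to a Sylow $5$-subgroup, then Theorem~\ref{s>1} for the contradiction. The only (harmless) difference is that you invoke part (3) of Theorem~\ref{s>1} ($t=5^c+1$ cannot equal $4$) where the paper uses part (4) ($4=t\leq u+1=2$); your explicit reductions to $G\in\CD G$ and to the Sylow subgroup are careful versions of steps the paper leaves implicit.
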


\begin{proof} An extraspecial group of order $5^3$ has a Chermak-Delgado lattice that is a quasi-antichain of width 6 by Proposition~\ref{xspec}.  If we assume that $G$ is a finite group with $\CD G$ a quasi-antichain of width 6 having exactly 4 abelian atoms then we know that $G$ has a Sylow $5$-subgroup $P$ with $\CD G \iso \CD P$ as lattices by Proposition~\ref{pgrp} and Theorem~\ref{w}.  Theorem~\ref{s>1} forces $4 = t \leq u + 1 = 2$; thus $G$ cannot exist. \end{proof}

\section{Examples}

In this section we construct several examples of $p$-groups having a quasi-antichain for their Chermak-Delgado lattice. The first two examples show that every possible quasi-antichain of width 2 can be realized as the Chermak-Delgado lattice of a $p$-group.

\begin{enumerate}
\item {\bf A group $G$ where $\CD G$ is a quasi-antichain of width 2 with no abelian atoms}: Let $H$ be any group with $\CD H = \{H, \Z H\}$.  A family of $p$-groups, each member of which having such a Chermak-Delgado lattice, was described in \cite{BHW2013}. 

Define $G = H \times H$.  In \cite{BW2012} it was established that $\CD {G}$ is a quasi-antichain of width 2 with atoms $\Z H \times H$ and $H \times \Z H$. Clearly $H \times \Z H = C_G(\Z H \times H)$.  

\item {\bf A group $P$ such that $\CD P$ is a quasi-antichain of width 2 with both atoms abelian}:  Let $P = \langle m_1, m_2, n_1, n_2 \rangle$ where each element has order $p$ and 
$$
[m_1,m_2] = [n_1,n_2] = 1, \quad [m_i, n_j] = z_{ij} \in \Z P \textrm{ for } i, j \in \{1,2\}, 
$$
and $\Z P = \langle z_{i,j} \mid i,j \in \{1,2\} \rangle$ is elementary abelian of order $p^4$.  Clearly $P$ is nilpotent of class 2 with order $p^8$ and Chermak-Delgado measure $p^{12}$.  Let $M = \langle m_1, m_2 \rangle \Z P$ and $N = \langle n_1, n_2 \rangle \Z P$.  It's a straightforward calculation to show that 
$C_P(m) = M$ whenever $m \in M \setminus \Z P$ and $C_P(n) = N$ whenever $n \in N \setminus \Z P$, whereas $C_P(x) = \langle x \rangle \Z P$ for all $x \in P \setminus (M \cup N)$.  Thus of all subgroups containing $\Z P$, only $M$ and $N$ have the largest measure, which is $p^{12}$.  Since $m_P(M) = m_P(N) = m_P(\Z P) = p^{12}$ and no other subgroups have this measure, $\CD P$ is a quasi-antichain of width 2 (containing $P$) such that both atoms are abelian.
\end{enumerate}

We now show that for every prime $p$ and every positive integer $n$, there exists a $p$-group whose Chermak-Delgado lattice is a quasi-antichain of width $p^n + 1$ with all atoms abelian. 

\begin{prop} Let $p$ be a prime and $n$ a positive integer.  Let $P$ be the group of all $3 \times 3$ lower triangular matrices over $\textrm{\bf GF}(p^n)$ with 1s along the diagonal.  The Chermak-Delgado lattice of $P$ is a quasi-antichain of width $p^n+1$ and all subgroups in the middle antichain are abelian.\end{prop}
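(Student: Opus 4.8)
The plan is to compute the Chermak-Delgado lattice of the Heisenberg-type group $P$ over $\textrm{\bf GF}(p^n)$ directly, by identifying the center, the relevant centralizers, and the Chermak-Delgado measure, and then to invoke Proposition~\ref{xspec}. Write $q = p^n$ and recall that $P$ consists of matrices $I + e$ where $e$ is strictly lower triangular; one checks immediately that $\ord P = q^3$, that $P' = \Z P$ is the subgroup of matrices whose only nonzero off-diagonal entry is in position $(3,1)$, and that $\ord {\Z P} = q$. Hence $P$ is nonabelian of nilpotence class $2$ with $P' = \Z P$ cyclic-by-nothing — more precisely $P'$ is elementary abelian of order $q = p^n$, and it is a single copy of the additive group of $\textrm{\bf GF}(p^n)$, so in particular $\ord {P'} = p^n$.

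First I would show $P \in \CD P$ by computing $m^*(P)$. For $x \in P \setminus \Z P$ one has $C_P(x) \geq \langle x \rangle \Z P$, which has order $pq$ if $x$ has order $p$ modulo the center (all non-central elements do, since $P$ has exponent $p$ when $p$ is odd and one treats $p=2$ separately, or more cleanly: $x \Z P$ has order $p$ in $P/\Z P \cong \textrm{\bf GF}(p^n)^2$ viewed additively, so $\langle x\rangle\Z P / \Z P$ has order $p$). Then $m_P(\langle x\rangle \Z P) = pq \cdot \ord{C_P(\langle x\rangle\Z P)}$; the key computation is that for a non-central $x$, $C_P(x)$ has index $p^n$ in $P$, i.e. $\ord{C_P(x)} = q^2$, because the commutator map $P/\Z P \times P/\Z P \to P' \cong \textrm{\bf GF}(p^n)$ is a nondegenerate alternating $\textrm{\bf GF}(p^n)$-bilinear form (it is essentially $(u,v)\mapsto u_1 v_2 - u_2 v_1$ where $u = (u_1,u_2)$ records the $(2,1)$ and $(3,2)$ entries). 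Thus $m_P(P) = q^3\cdot q = q^4$ and any subgroup $S$ with $\Z P \leq S$ and $\ord S = pq$ satisfies $m_P(S) = pq \cdot q^2 = pq^3 < q^4$ unless... wait, that's smaller; so instead one checks $m^*(P) = q^4 = \ord P \ord{\Z P}$, attained by $P$ and $\Z P$, and I must verify no intermediate subgroup beats $q^4$. The cleanest route: for any $S$ with $\Z P < S < P$ that is abelian of order $pq$, $C_P(S) $ is the full preimage in $P$ of the perp of the line $S/\Z P$ under the symplectic form, which has order $q^2$; so $m_P(S) = pq\cdot q^2$. Hmm — that equals $p q^3$, which is $q^4$ exactly when $p = q$, i.e. $n=1$. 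This shows my indexing is off: in fact the atoms should have order $q^2$, not $pq$. Re-examining: an atom $S$ with $\Z P < S$ and $m_P(S) = q^4$ needs $\ord S \cdot \ord{C_P(S)} = q^4$ with $\ord{\Z P} = q$ dividing $\ord S$; taking $S/\Z P$ a $1$-dimensional $\textrm{\bf GF}(p^n)$-subspace gives $\ord S = q^2$ and $C_P(S) = S$ (self-perp under the symplectic form in dimension $2$), so $m_P(S) = q^2\cdot q^2 = q^4$. That is the right picture, and there are exactly $q+1 = p^n+1$ such one-dimensional $\textrm{\bf GF}(p^n)$-subspaces, all abelian.

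So the real content is: (i) $P/\Z P$ carries a nondegenerate alternating $\textrm{\bf GF}(p^n)$-bilinear form induced by commutation into $P' \cong \textrm{\bf GF}(p^n)$; (ii) consequently the subgroups $S$ with $\Z P \leq S$ of measure $q^4$ are exactly the preimages of $\textrm{\bf GF}(p^n)$-lines, of which there are $p^n+1$, each abelian and self-centralizing; (iii) no subgroup not containing $\Z P$ can have measure $\geq q^4$, because if $\Z P \not\leq C_P(C_P(S))$ then one gets a strict loss, or more simply because every member of $\CD P$ contains the minimum of $\CD P$ which contains $\Z P$ — indeed one shows $\Z P \in \CD P$, hence $\Z P$ is below the minimum, hence contained in it. Then by Proposition~\ref{xspec}, since $P' = \Z P$ is (elementary) abelian of order $p^n$... but Proposition~\ref{xspec} requires $\ord{P'} = p$ and $P/\Z P \cong C_p\times C_p$, which holds only for $n=1$. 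Therefore for $n>1$ I cannot cite Proposition~\ref{xspec} and must argue directly as in (i)--(iii); this direct argument is in any case self-contained and not long.

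The main obstacle is step (iii) — verifying that nothing outside the poset of subgroups containing $\Z P$ achieves the maximal measure, equivalently that $\Z P$ lies in every Chermak-Delgado subgroup. I expect to handle it by first proving $\Z P \in \CD P$ via the measure computation $m_P(\Z P) = q \cdot \ord{C_P(\Z P)} = q\cdot q^3 = q^4 = m^*(P)$, then using the standard fact (cited in the excerpt) that the minimum $D$ of $\CD P$ satisfies $D \leq \Z P$ and $C_P(D) \geq C_P(\Z P) = P$, forcing $D = \Z P$; hence every member of $\CD P$ contains $\Z P$, and the lattice is precisely $\{\,S : \Z P \leq S,\ m_P(S) = q^4\,\} = \{P, \Z P\} \cup \{\text{preimages of the } p^n+1 \text{ lines}\}$, a quasi-antichain of width $p^n+1$ with every atom abelian.
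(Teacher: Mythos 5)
Your proposal is correct and its core computations coincide with the paper's: both arguments rest on showing $m^*(P)=p^{4n}$, that $\ord{C_P(x)}=p^{2n}$ for every noncentral $x$, and that the intermediate members of $\CD P$ are exactly the $p^n+1$ maximal abelian subgroups of order $p^{2n}$. You are also right to discard Proposition~\ref{xspec}, which only covers $n=1$. The routes diverge in how the classification of atoms is obtained: the paper simply cites Exercise~39 of \cite[III.16]{Huppert} for the count of abelian subgroups of maximal order and then uses a short lattice argument (if $U\ne C_P(U)$ then $U\cap C_P(U)=\Z P$, whence any $x\in U\setminus \Z P$ lies in $C_P(x)=C_P(U)$ and hence in $\Z P$, a contradiction) to force each atom to be self-centralizing; you instead derive everything from the nondegenerate alternating $\textrm{\bf GF}(p^n)$-bilinear form that commutation induces on $P/\Z P$, identifying the atoms with the $p^n+1$ isotropic lines. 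Your version is self-contained where the paper leans on Huppert, at the cost of two details you should write out: (a) the order of quantifiers in establishing $m^*(P)=p^{4n}$ --- you must bound $m(U)$ for \emph{all} $U\le P$ (using $\ord{C_P(x)}=p^{2n}$ and $U\le C_P(C_P(U))$) \emph{before} you may assert $P,\Z P\in\CD P$ and invoke the minimum-element argument, since your step~(iii) as written presupposes the value of $m^*(P)$; and (b) the elimination of subgroups $S$ with $\Z P<S<P$ whose image in $P/\Z P$ is an $\mathbb{F}_p$-subspace but not a $\textrm{\bf GF}(p^n)$-subspace, which follows because $C_P(x)/\Z P$ is the kernel of a $\textrm{\bf GF}(p^n)$-linear functional and hence $C_P(S)/\Z P$ is the perp of the $\textrm{\bf GF}(p^n)$-span of $S/\Z P$, giving $m(S)<p^{4n}$ in these cases. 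Both points are routine with the machinery you set up, so the proof goes through.
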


\begin{proof} By Exercise 39 in \cite[III.16]{Huppert}, $P$ has exactly $p^n + 1$ abelian subgroups of maximal order equal to $p^{2n}$; these subgroups have measure $p^{4n} = m(P)$. If $x \in P \setminus \Z P$, it is easy to check that $\ord {C_P(x)} = p^{2n}$.  Therefore if $U \in \CD P$ with $\Z P < U < P$ then $\ord {C_P(U)} \leq p^{2n}$ and $\ord {U} = \ord {C_P(C_P(U))} \leq p^{2n}$.  If follows that $m^*(P) = p^{4n}$ and $\ord U = \ord {C_P(U)} = p^{2n}$.  If $U \ne C_P(U)$ then $U \cap C_P(U) = \Z P$ since $U \cap C_P(U) \in \CD P$.  But then for $x \in U \setminus \Z P$ we have $C_P(x) = C_P(U)$ by order considerations and therefore $x \in U \cap C_P(U) = \Z P$, a contradiction.  Thus $U = C_P(U)$ is one of the abelian subgroups of maximal order and the assertion follows. \end{proof}

Extraspecial groups of order $p^3$ are examples where each of the $p + 1$ atoms in the quasi-antichain is abelian (Proposition~\ref{xspec}); the next two propositions construct $p$-groups where the Chermak-Delgado lattice is a quasi-antichain of width $p + 1$ and, depending on the value of $p$ modulo $4$, the number of abelian atoms is either $0$, $1$, or $2$.

\begin{prop}\label{bigex} Given any prime $p$ there exists a group $P$ of order $p^9$ such that $\CD P$ is a quasi-antichain of width $p + 1$.  In this example: if $p = 2$ then exactly one of the three atoms of $\CD P$ is abelian, when $p \equiv 1$ modulo 4 then exactly two of the $p+1$ atoms of $\CD P$ are abelian, and if $p \equiv 3$ modulo 4 then none of the atoms in $\CD P$ are abelian.\end{prop}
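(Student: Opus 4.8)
The plan is to construct $P$ explicitly as a central product, or an extension of an elementary abelian group by an extraspecial-like group, whose centralizer structure is easy to read off. I would start from an extraspecial group $E$ of order $p^3$ (giving a quasi-antichain of width $p+1$ by Proposition~\ref{xspec}) and ``inflate'' each atom by adjoining a two-dimensional module for $E/\Z E \iso C_p \times C_p$, in such a way that the $p+1$ cyclic subgroups of $E/\Z E$ correspond to the $p+1$ atoms of $\CD P$. Concretely, let $V$ be a faithful $\textrm{\bf GF}(p)$-module of dimension $6$ on which $E/\Z E$ acts, chosen so that the fixed-point subspace of each of the $p+1$ subgroups of order $p$ in $E/\Z E$ has the same dimension; set $P = V \rtimes E$ suitably twisted so that $\Z P$ has the right order and $P$ has class $2$ with $\ord P = p^9$. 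The atoms of $\CD P$ should then be the subgroups $C_V(U) \cdot \langle U \rangle \cdot \Z P$ as $U$ ranges over the $p+1$ subgroups of order $p$ in $E/\Z E$, and each atom is abelian precisely when the corresponding preimage $\langle u \rangle$ centralizes the relevant piece of $V$, i.e.\ when a certain skew-symmetric form restricted to a line is zero.

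The key steps, in order, are: (i) write down generators and relations for $P$ and verify $\ord P = p^9$, nilpotence class $2$, and $\ord{\Z P}$; (ii) compute $m^*(P)$ and show it is attained; for this I would check that for $x \in P \setminus \Z P$ the centralizer $C_P(x)$ has one of a small number of possible orders, forcing any $U \in \CD P$ with $\Z P < U < P$ to have a prescribed order, exactly as in the proof of the $3\times 3$ triangular matrix example above; (iii) identify the $p+1$ atoms with the $p+1$ lines in $E/\Z E$ and confirm via Proposition~\ref{basic} that $\CD P$ is a quasi-antichain of width $p+1$; (iv) for each atom, decide abelianness by evaluating the commutator pairing on the corresponding line. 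The dependence on $p \bmod 4$ enters at step (iv): the number of lines on which the relevant quadratic/bilinear form vanishes is governed by whether $-1$ is a square mod $p$. When $-1$ is a nonsquare (i.e.\ $p \equiv 3 \bmod 4$) the form is anisotropic and no line is isotropic, giving $0$ abelian atoms; when $-1$ is a square ($p \equiv 1 \bmod 4$) there are exactly two isotropic lines, giving $2$ abelian atoms; and $p = 2$ is the degenerate/defective case yielding exactly $1$.

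The main obstacle I anticipate is step (ii): pinning down $m^*(P)$ and, in particular, ruling out ``unexpected'' subgroups $U$ with $\Z P \not\leq U$ or with the wrong order that might sneak into $\CD P$. The argument used above for the triangular group — that $U \cap C_P(U) \in \CD P$ together with order considerations forces $U = C_P(U)$ — should adapt, but it requires a clean lemma stating that every member of $\CD P$ contains $\Z P$ (immediate, since $\Z P \leq C_G(U)$ for all $U$, but one must still control orders) and that the centralizer orders of non-central elements take only the values that make the measure computation go through. Getting the module $V$ and the twisting cocycle right so that these centralizer orders are uniform is the delicate part of the construction; everything downstream is then a finite check for each congruence class of $p$ modulo $4$, plus the separate small computation for $p = 2$.
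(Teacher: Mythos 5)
Your proposal is a plan rather than a proof: the entire content of an existence statement like this lies in writing down an explicit group and verifying its Chermak--Delgado lattice, and you defer exactly that (``suitably twisted,'' ``getting the module $V$ and the twisting cocycle right \dots is the delicate part''). You have correctly guessed the mechanism behind the mod~$4$ dependence --- the paper's atoms $M_k$ satisfy $C_P(M_k)=M_{-k^{-1}}$, so an atom is abelian iff $k^2\equiv -1 \pmod p$, i.e.\ iff $-1$ is a square --- but you have not produced a group in which to run that computation.

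More seriously, the architecture you propose appears to be incompatible with the case $p\equiv 3\pmod 4$. If $P=V\rtimes E$ with $V$ elementary abelian of order $p^6$, then $V\leq C_P(V)$ gives $m(V)\geq p^{12}$; since the desired lattice forces $\ord{\Z P}=p^3$ and $m^*(P)=\ord{P}\,\ord{\Z P}=p^{12}$ (the atoms must have order $p^6$ by Proposition~\ref{basic}), the subgroup $V$ lies in $\CD P$ and is therefore an abelian atom. That contradicts the requirement of \emph{zero} abelian atoms when $p\equiv 3\pmod 4$; indeed, in that case the group cannot contain any abelian subgroup of order $p^6$, so it admits no such semidirect decomposition. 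The paper avoids this by taking $P$ to be a \emph{central product} of two isomorphic nonabelian groups $M_0=\langle x_1,x_2,x_3\rangle\Z P$ and $M_p=\langle y_1,y_2,y_3\rangle\Z P$ of order $p^6$, amalgamated over $\Z P=\langle z_{12},z_{13},z_{23}\rangle$ of order $p^3$, with $[x_i,x_j]=[y_i,y_j]=z_{ij}$ and $[x_i,y_j]=1$; the $p+1$ atoms are the diagonals $M_i=\langle x_1y_1^i,x_2y_2^i,x_3y_3^i\rangle\Z P$, and the centralizer computation $C_P(M_k)=M_{-k^{-1}}$ is what yields width $p+1$ and the stated count of abelian atoms. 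You would need to replace your $V\rtimes E$ scaffold with something of this kind and then actually carry out the centralizer computations in step (ii), which you have identified as the obstacle but not overcome.
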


\begin{proof} Let $P$ be generated by $\{x_1, x_2, x_3, y_1, y_2, y_3\}$ with defining relationships $x_i^p = y_i^p = 1$ and $[x_i, y_j] = 1$ for all $i,j$ such that $1 \leq i, j \leq 3$, $\Z P = \langle z_{1,2}, z_{1,3}, z_{2,3} \rangle$ is elementary abelian with order $p^3$, and $[x_i,x_j] = [y_i, y_j] = z_{ij}$ for every $i,j$ with $1 \leq i < j \leq 3$. Let $M_0 = \langle x_1, x_2, x_3 \rangle \Z P$ and $M_p = \langle y_1, y_2, y_3 \rangle \Z P$.  For $1 \leq i \leq p - 1$, let $M_i = \langle x_1y_1^i, x_2y_2^i, x_3y_3^i \rangle \Z P$. We show that $\CD P = \{ P, \Z P, M_i \mid 0 \leq i \leq p \}$.

Observe that $P$ is the central product of $M_0$ with $M_p$ and $\Z P = M_0 \cap M_p$.  Additionally $C_P(M_0) = M_p$ and vice versa, yielding $m_P(P) = m_P(M_0) = m_P(M_p) = p^{12}$. It is easy to show that if $x \in M_0 \setminus \Z P$ then $C_{M_0} (x) = \langle x \rangle \Z P$ and $C_P(x) = \langle x \rangle M_p$.  It follows that $C_P(\langle x, y \rangle ) = \langle x, y \rangle \Z P$ whenever $x \in M_0 \setminus \Z P$ and $y \in M_p \setminus \Z P$.  

Let there exist $a_i, b_i \in \ZZ / p \ZZ$ such that $x = x_1^{a_1}x_2^{a_2}x_3^{a_3} \in M_0 \setminus \Z P$ and $y = y_1^{b_1}y_2^{b_2}y_3^{b_3} \in M_p \setminus \Z P$.  Assume that at least one of $a_1, a_2, a_3$ and one of $b_1, b_2, b_3$ are non-zero.  For $x'y'$ with similar structure, $x'y' \in C_P(xy)$ if and only if $[x,x'] = [y',y]$.  Further decomposing the commutators reveals 
$$
[x,x'] = \prod\limits_{1 \leq i < j \leq 3} z_{ij}^{a_ia_{j}' - a_j a_{i}'} \textrm{ and }
[y',y] = \prod\limits_{1 \leq i < j \leq 3} z_{ij}^{b_i'b_{j} - b_j' b_{i}};
$$
Thus $[xy,x'y'] = 1$ if and only if each of the three equations $a_ia_{j}' - a_j a_{i}'  - b_i'b_{j} + b_j'b_{i} = 0$ hold where $1 \leq i < j \leq 3$.  If $(a_1, a_2, a_3)$ and $(b_1, b_2, b_3)$ are not scalar multiples then $\langle x, y, m_1^{b_1}m_2^{b_2}m_3^{b_2}n_1^{a_1}n_2^{a_2}n_3^{a_3} \rangle \Z P = C_P(xy)$.  On the other hand, if there exists $k$ such that $(b_1, b_2, b_3) = k(a_1, a_2, a_3)$ then $C_P(xy) = \langle m, m_in_i^{-k^{-1}} \mid 1 \leq i \leq 3\rangle \Z P = \langle m \rangle M_{-k^{-1}}$. Therefore $C_P(M_k) = M_{-k^{-1}}$ for $1 \leq k \leq p - 1$ and $m_P(M_k) = p^{12}$.

It follows then that $m(U) < m(M_k)$ whenever $\Z P < U < M_k$.  Additionally, if $U \leq P$ and there exist $u_1,u_2 \in U$ where $u_1 \in M_k$ and $u_2 \in M_{k'}$ with $k \ne k'$ then $C_P(U) \leq \Z P$.  Hence $m^*(P) = p^{12}$ and $\CD P = \{P, \Z P, M_k \mid 0 \leq k \leq p\}$.

Since $C_P(M_k) = M_{-k^{-1}}$ for $1 \leq k \leq p - 1$, there exists an abelian atom of $\CD P$ if and only if $p = 2$ or $p \equiv 1$ modulo 4.  When $p = 2$ then $M_1$ is the unique abelian atom and if $p \equiv 1$ modulo 4 then $M_1$ and $M_{p-1}$ are both abelian, but no other atom in $\CD P$ is abelian.  When $p \equiv 3$ modulo 4 then there do not exist any abelian atoms in $\CD P$. \end{proof}

\begin{prop}\label{bigex2} Let $p$ be a prime.  There exists a group $Y$ of order $p^9$ such that $\CD Y$ is a quasi-antichain of width $p + 1$.  In this example, if $p = 2$ then exactly one of the three atoms of $\CD Y$ is abelian and if $p$ is odd then exactly two of the $p + 1$ atoms are abelian.\end{prop}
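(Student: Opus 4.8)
The plan is to take for $Y$ a ``sign-twisted'' variant of the group $P$ built in Proposition~\ref{bigex}. Let $Y = \langle x_1,x_2,x_3,y_1,y_2,y_3\rangle$ be of nilpotence class $2$ with $x_i^p = y_i^p = 1$ and $[x_i,y_j]=1$ for all $i,j$, with $\Z Y = \langle z_{12},z_{13},z_{23}\rangle$ elementary abelian of order $p^3$, and with $[x_i,x_j] = z_{ij}$ and $[y_i,y_j] = z_{ij}^{-1}$ for $1\le i<j\le 3$; the only change from Proposition~\ref{bigex} is the sign on the $y$-relations (for $p=2$ those relations coincide with the ones there, and the presentation is adjusted in the same way). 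Put $M_0 = \langle x_1,x_2,x_3\rangle\Z Y$, $M_p = \langle y_1,y_2,y_3\rangle\Z Y$, and $M_k = \langle x_1y_1^k,x_2y_2^k,x_3y_3^k\rangle\Z Y$ for $1\le k\le p-1$. Exactly as in Proposition~\ref{bigex} one checks that $\ord Y = p^9$, that $Y' = \Z Y = \Phi(Y)$ with $Y/\Z Y$ elementary abelian of order $p^6$, that $m(Y) = p^{12}$, and that each $M_k$ has order $p^6$ with $M_i\cap M_j = \Z Y$ for $i\ne j$.

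The remainder of the argument parallels the proof of Proposition~\ref{bigex}. Let $\bar g$ denote the image of $g$ in $Y/\Z Y$; identify $Y/\Z Y$ with $\mathbb F_p^3\oplus\mathbb F_p^3$ and $\Z Y$ with $\wedge^2\mathbb F_p^3$, so that the commutator becomes the alternating form $\beta\bigl((u,u'),(v,v')\bigr) = u\wedge v - u'\wedge v'$ and $C_Y(g)/\Z Y = \bar g^{\perp}$ for $g\notin\Z Y$. Since $w\mapsto u\wedge w$ on $\mathbb F_p^3$ has rank $2$ for every $u\ne 0$ and its image depends only on the line $\langle u\rangle$, a short computation gives the dichotomy: $\ord{C_Y(g)} = p^7$ precisely when $\bar g = (u,ku)$ for some $k\in\mathbb P^1(\mathbb F_p)$ --- i.e.\ when $\bar g$ lies on one of the $p+1$ lines $M_k/\Z Y$ --- and $\ord{C_Y(g)} = p^6$ otherwise. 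For a subgroup $U$ with $\Z Y\le U\le Y$ one has $m(U) = p^{6+\dim(U/\Z Y)+\dim(C_Y(U)/\Z Y)}$ and $C_Y(U)/\Z Y = (U/\Z Y)^{\perp}$, so $U\in\CD Y$ forces $\dim(U/\Z Y)+\dim\bigl((U/\Z Y)^{\perp}\bigr) = 6$. Feeding the dichotomy into this identity exactly as in Proposition~\ref{bigex} --- if $U$ meets two distinct lines $M_k$, $M_{k'}$ nontrivially the centralizer is too small, and if $U$ contains an off-line element then $C_Y(U)$, hence $C_Y(C_Y(U)) = U$, is too small --- eliminates every subgroup except $\Z Y$, $Y$, and the $M_k$. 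Therefore $m^*(Y) = p^{12}$ and $\CD Y = \{Y,\Z Y\}\cup\{M_k\mid 0\le k\le p\}$, a quasi-antichain of width $p+1$.

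It remains to count the abelian atoms. The same perpendicular computation gives, for $1\le k\le p-1$, that $C_Y(M_k)/\Z Y = \{(v,v')\mid (v-kv')\wedge u = 0 \text{ for all }u\} = \{(kv',v')\mid v'\}$, so $C_Y(M_k) = M_{1/k}$, and likewise $C_Y(M_0) = M_p$, $C_Y(M_p) = M_0$. Thus $M_k$ is abelian exactly when $M_k = C_Y(M_k)$, i.e.\ when $k^2 = 1$ in $\mathbb F_p$. For odd $p$ this holds precisely for $k = 1$ and $k = p-1$, so there are exactly two abelian atoms $M_1$ and $M_{p-1}$; for $p = 2$ only $k = 1$ works, so there is exactly one abelian atom $M_1$ (with $M_0$ and $M_2$ interchanged by centralization). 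This proves the proposition.

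The step I expect to be the main obstacle is the measure bookkeeping in the middle paragraph --- guaranteeing that no ``mixed'' subgroup straddling two of the lines $M_k$, nor one generated with the help of an off-line element, slips into $\CD Y$. Because $\beta$ is vector-valued the familiar identity $\dim W + \dim W^{\perp} = \dim V$ can fail (already $\langle\overline{x_1},\overline{y_1}\rangle^{\perp} = \langle\overline{x_1},\overline{y_1}\rangle$), so the perpendicular spaces have to be computed directly; this is where the somewhat terse case analysis of Proposition~\ref{bigex} really does its work. The case $p = 2$ also needs separate attention, since exponent $p$ is no longer available there. Everything past the centralizer dichotomy is a routine adaptation of the proof of Proposition~\ref{bigex}.
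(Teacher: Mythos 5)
Your construction is exactly the paper's: the group of Proposition~\ref{bigex} with the relations on the $y_i$ twisted to $[y_i,y_j]=z_{ij}^{-1}$, leading to $C_Y(M_k)=M_{k^{-1}}$ and hence abelian atoms precisely for $k^2=1$. The paper's own proof is a two-line reference back to Proposition~\ref{bigex}; your write-up fills in the same computation (in slightly more explicit $\wedge^2\mathbb{F}_p^3$ language) and is correct.
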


\begin{proof} Define $P$ as in Proposition~\ref{bigex} except designate that $[n_i,n_j] = z_{ij}^{-1}$.  The same arguments made earlier will now show that $C_P(M_i) = M_{i^{-1}}$ for $i = 1, 2, \dots, p-1$.  This forces $M_1$ and $M_{p-1}$ to be abelian, yet the remaining facts still stand.\end{proof}

In lattice theory \cite[Chapter 1, Section 2]{Birkhoff}, a lattice $\mathcal{L}$ has a duality $\theta : \mathcal{L} \rightarrow \mathcal{L}$ if $\theta$ is a bijection and if $A,B \in \mathcal{L}$ with $A \leq B$ implies $\theta(B) \leq \theta(A)$.  Such a duality need not have order 2 as a function; in the case of the Chermak-Delgado lattice the duality {\it does} have order 2.  In particular, the examples in this section show that all possible types of quasi-antichains of width 4 with duality of order 2 occur as Chermak-Delgado lattices of $3$-groups and those of width 3 with duality of order 2 occur as Chermak-Delgado lattices of $2$-groups.  

This leaves several questions open for investigation, including: Which values of $t$ (in the notation of Theorem~\ref{s>1}) are possible in quasi-antichain Chermak-Delgado lattices of width $w = p^n + 1$ when $n > 1$? That is, which dualities can be realized by the centralizer map?  The first open case is when $w = 5$ and $t = 3$. And, are there examples of groups $G$ with $G \in \CD G$ and $\CD G$ a quasiantichain where $t = 0$ and either $p = 2$ or $p \equiv 1$ modulo $4$?

\section*{Acknowledgements}

The research of the second author was supported by Projecto MTM2010-19938-C03-02, Ministerio de Ciencia e Innovaci\'{o}n, Spain. The research of the third author was supported by a Scholarly and Creative Activity Grant from Oswego State University.

\bibliographystyle{amsplain}
\bibliography{references}

\end{document}